\theoremstyle{plain}%
 \newtheorem{theorem}{Theorem}
\theoremstyle{remark}
\newtheorem{remark}{Remark}
\theoremstyle{definition}
\newtheorem{example}{Example}
 \title{An iterative approach toward hypergeometric accelerations}
 \author{John M.\ Campbell}
 \address{Department of Mathematics and Statistics, Dalhousie University, Halifax, Nova Scotia, Canada}
 \email{jh241966@dal.ca}
 \thanks{The author was supported by a Killam Postdoctoral Fellowship from the Killam Trusts.}
 \date{May, 05, 2025}
 \keywords{hypergeometric series, series acceleration, difference equation, Zeilberger's algorithm, 
 recursion, $\pi$ formula, Ramanujan-type series, rising factorial}
 \subjclass{33F10, 05A10}
\begin{document}
 \begin{abstract}
 Each of Ramanujan's series for $\frac{1}{\pi}$ is of the form $$ \sum_{n=0}^{\infty} z^n \frac{ (a_{1})_{n} (a_{2})_{n} (a_{3})_{n} }{ 
 (b_{1})_{n} (b_{2})_{n} (b_{3})_{n} } (c_{1} n + c_2) $$ for rational parameters such that the difference between the arguments of any lower 
 and upper Pochhammer symbols is not an integer. In accordance with the work of Chu, if an infinite sum of this form admits a closed 
 form, then this provides a formula of \emph{Ramanujan type}. Chu has introduced remarkable results on formulas of Ramanujan type, 
 through the use of accelerations based on $\Omega$-sums related to classical hypergeometric identities. Building on our past work on 
 an acceleration method due to Wilf relying on inhomogeneous difference equations derived from Zeilberger's algorithm, we extend this 
 method through what we refer to as an \emph{iterative} approach that is inspired by Chu's accelerations derived using iteration patterns 
 for well-poised $\Omega$-sums and that we apply to introduce and prove many accelerated formulas of Ramanujan type for 
 universal constants, along with many further accelerations related to the discoveries of Ramanujan, Guillera, and Chu. 
 \end{abstract}

 \maketitle

\section{Introduction}
 Ramanujan's series expansions for $\frac{1}{\pi}$ \cite[pp.\ 352--354]{Berndt1994} \cite{Ramanujan1914} are subjects of ongoing 
 fascination, and one of the most famous such expansions is such that 
\begin{equation}\label{Ramanujanbinomial}
 \frac{9801}{\pi \sqrt{8}} = \sum_{n=0}^{\infty} \frac{ (4n)! \left( 1103 + 26390 n \right) }{ (n!)^{4} 396^{4n} }. 
\end{equation}
 Gosper applied Ramanujan's formula in \eqref{Ramanujanbinomial} to compute a record-breaking number of digits of $\pi$, 
 with reference to the survey by Baruah et al.\ on Ramanujan's series \cite{BaruahBerndtChan2009}. Ramanujan's series for $\frac{1}{\pi}$ 
 are related to results due to Guillera 
 \cite{Guillera2018,Guillera2006,Guillera2008,Guillera2013More,Guillera2011,Guillera2010,Guillera2002,Guillera2013divergent} that 
 were discovered experimentally via the Wilf--Zeilberger (WZ) method \cite{PetkovsekWilfZeilberger1996} and that are often regarded as 
 groundbreaking in areas of mathematics related to WZ theory and Ramanujan's formulas for $\frac{1}{\pi}$. Such formulas are 
 similarly related to discoveries obtained by Chu and Zhang \cite{ChuZhang2014} via an experimental method that also relies on 
 difference equations for hypergeometric series. Our past work \cite{Campbellunpublished,CampbellLevrie2024free} concerning an 
 acceleration method due to Wilf \cite{Wilf1999} provides a foundation for the \emph{iterative} approach toward hypergeometric 
 accelerations that we describe in Section \ref{sectioniterative} below and that we apply to introduce many accelerated summations 
 that are for fundamental constants such as $\pi$ and that are related to and inspired by the discoveries due to Ramanujan, Guillera, 
 and Chu. 

 Let the \emph{Pochhammer symbol} or \emph{rising factorial} be defined so that $$(x)_{0} = 1 \ \ \ \text{and} \ \ \ (x)_{n} = x(x+1) 
 \cdots (x + n - 1)$$ for positive integers $n$, letting $x$ be referred to as the \emph{argument}. As in many research contributions by 
 Chu et al.\ related to hypergeometric recursions or accelerations, we adopt the notational shorthand such that 
\begin{equation*}
 \left[ \begin{matrix} \alpha, \beta, \ldots, \gamma \vspace{1mm} \\ 
 A, B, \ldots, C \end{matrix} \right]_{n} = \frac{ (\alpha)_{n} (\beta)_{n} 
 \cdots (\gamma)_{n} }{ (A)_{n} (B)_{n} \cdots (C)_{n}}. 
\end{equation*}
 With this notational convention, we may rewrite, for example, Ramanujan's series in 
 \eqref{Ramanujanbinomial} so that 
\begin{equation}\label{rewriteRamanujan}
 \frac{9801}{\pi \sqrt{8}} = \sum_{n=0}^{\infty} \left( \frac{1}{96059601} \right)^{n} 
 \left[ \begin{matrix} 
 \frac{1}{4}, \frac{1}{2}, \frac{3}{4} \vspace{1mm} \\ 
 1, 1, 1 \end{matrix} \right]_{n} (26390 n + 1103). 
\end{equation}
 This provides an instance of a formula that is said to be of \emph{Ramanujan type}, and a definition associated with the infinite series 
 involved in such formulas is given in Section \ref{sectiontype} below. Formulas of Ramanujan type provide a central object of study in this 
 work, which may be thought of as providing a 
 framework for applying Zeilberger's algorithm \cite[\S6]{PetkovsekWilfZeilberger1996} so as to obtain 
 series accelerations in a way that unifies 
 the techniques involved in a number of past research contributions 
 \cite{Campbellunpublished,CampbellLevrie2024free,Wilf1999}. 

\section{Formulas of Ramanujan type}\label{sectiontype}
 The formulas 
\begin{equation}\label{2over27Chu1}
 \frac{3\pi^2}{4} = 
 \sum_{n = 0}^{\infty} 
 \left( \frac{2}{27} \right)^{n} \left[ \begin{matrix} 
 1, 1, 1
 \vspace{1mm} \\ 
 \frac{4}{3}, \frac{5}{3}, \frac{3}{2} 
 \end{matrix} \right]_{n} (10n+7)
 \end{equation}
 and 
 \begin{equation}\label{2over27Chu2}
 \frac{3 \sqrt{2}}{4} = 
 \sum_{n = 0}^{\infty} 
 \left( \frac{2}{27} \right)^{n} \left[ \begin{matrix} 
 \frac{1}{2}, \frac{1}{2}, \frac{1}{2} 
 \vspace{1mm} \\ 
 \frac{5}{6}, 1, \frac{7}{6} 
 \end{matrix} \right]_{n} (5n+1) 
 \end{equation}
 are described as being of \emph{Ramanujan type} by Chu and are highlighted as main results in the work of Chu \cite{Chu2020} and 
 have inspired our new results highlighted in Section \ref{subsectionRamanujantype} below. The Chu formulas in \eqref{2over27Chu1} 
 and \eqref{2over27Chu2} recall Ramanujan's formula \cite[pp.\ 352--354]{Berndt1994} \cite{Ramanujan1914} 
\begin{equation}\label{Ramanujan2over27}
 \frac{27}{4 \pi} = 
 \sum_{n = 0}^{\infty} 
 \left( \frac{2}{27} \right)^{n} \left[ \begin{matrix} 
 \frac{1}{3}, \frac{1}{2}, \frac{2}{3} 
 \vspace{1mm} \\ 
 1, 1, 1 
 \end{matrix} \right]_{n} (15n+2) 
 \end{equation}
 involving the same convergence rate as in the Chu series in \eqref{2over27Chu1}--\eqref{2over27Chu2}. Observe that the Ramanujan 
 series among \eqref{rewriteRamanujan} and \eqref{Ramanujan2over27} and the series in the Chu formulas in 
 \eqref{2over27Chu1}--\eqref{2over27Chu2} are all of the form 
\begin{equation}\label{generalRT}
 \sum_{n=0}^{\infty} z^n \frac{ (a_{1})_{n} (a_{2})_{n} (a_{3})_{n} }{ (b_{1})_{n} (b_{2})_{n} (b_{3})_{n} } (c_{1} n + c_2) 
\end{equation} 
 for rational values $a_{i}$, $b_{i}$, $c_{i}$, and $z$ such that the difference between the arguments of any lower and upper 
 Pochhammer symbols is never an integer, i.e., so that any given combinations of rising factorials are irreducible, in terms of not being 
 reducible to rational functions. A series of this form that admits a closed-form evaluation is said to be of \emph{Ramanujan type}, 
 with reference to works by Chu \cite{Chu2018,Chu2020} and by Chu and Zhang \cite{ChuZhang2014}. 

 According to Chu and Zhang \cite{ChuZhang2014}, the first known formula of Ramanujan type is such that 
\begin{equation}\label{displayBauerRamanujan}
 \frac{2}{\pi} = 
 \sum_{n = 0}^{\infty} 
 (-1)^{n} \left[ \begin{matrix} 
 \frac{1}{2}, \frac{1}{2}, \frac{1}{2} 
 \vspace{1mm} \\ 
 1, 1, 1 
 \end{matrix} \right]_{n} (4n+1), 
 \end{equation}
 noting the absolute convergence rate of $1$ for the series displayed in \eqref{displayBauerRamanujan}, in contrast to the very fast 
 converging series among \eqref{rewriteRamanujan}--\eqref{Ramanujan2over27}. The rational expansion in 
 \eqref{displayBauerRamanujan} was first discovered in 1859 by Bauer \cite{Bauer1859} via a Fourier--Legendre expansion. The same 
 formula in \eqref{displayBauerRamanujan} was rediscovered by Ramanujan and was, famously, included in Ramanujan's first letter to 
 Hardy, and hence the expression \emph{Bauer--Ramanujan formula} in reference to the series expansion in 
 \eqref{displayBauerRamanujan} \cite{CampbellLevrie2024history}. There is a rich history surrounding the Bauer--Ramanujan formula 
 and its many different proofs 
 \cite{CampbellLevrie2024history}, motivating, from interdisciplinary perspectives and in view of the many different
 areas in mathematics associated with the known proofs of \eqref{displayBauerRamanujan}, 
 the investigation as to how 
 new techniques can be applied to generate formulas of Ramanujan type. 

 As suggested by the equivalence of \eqref{Ramanujanbinomial} and \eqref{rewriteRamanujan}, the combination of rising factorials 
 involved within the summand in \eqref{rewriteRamanujan} may be rewritten with binomial coefficients or factorials with integer-valued 
 arguments (and without any remaining Pochhammer symbols with non-integer arguments), according to the \emph{Gauss 
 multiplication formula} $$ \Gamma(n x) = \left( 2 \pi \right)^{\frac{1-n}{2}} n^{n x - \frac{1}{2}} \prod_{k=0}^{n-1} \Gamma\left( x + 
 \frac{k}{n} \right) $$ for the $\Gamma$-function such that $\Gamma(x) = \int_{0}^{\infty} t^{x-1} e^{-t} \, dt$ for $\Re(x) > 0$, 
 referring to Rainville's classic text \cite{Rainville1960} for related background material. Each of Ramanujan's series for $\frac{1}{\pi}$ can 
 be similarly rewritten with binomial coefficients, and in such a way so that the theory of elliptic functions can be used to prove and 
 generalize these formulas for $\frac{1}{\pi}$, by expressing the generating functions for the associated combinations of binomial 
 coefficients using the special functions known as the \emph{complete elliptic integrals}. This elliptic functions-based approach was 
 pioneered by the Borwein brothers \cite{BorweinBorwein1987}, who, famously, first proved all of Ramanujan's formulas
 for $\frac{1}{\pi}$. In closely related ways, the theories of elliptic functions to alternative bases associated with Ramanujan's series 
 were developed by Berndt, Bhargava, and Garvan \cite{BerndtBhargavaGarvan1995} and heavily rely on modular relations and theta 
 functions. In contrast, if a series of the form shown in \eqref{generalRT} is such that its Pochhammer symbols cannot be combined to 
 form integer-valued factorials (without any rising factorials with non-integer arguments), then, typically, such series cannot be 
 expressed, at least in any direct or meaningful way, using classical properties of special functions such as elliptic or theta functions. In 
 this case, we say that the combination of Pochhammer symbols involved is \emph{irregular}. 

 Chu has introduced formulas of Ramanujan type involving irregular combinations of Pochhammer symbols and involving a variety of 
 different convergence rates, and the remarkable nature of such results is given by how the generating functions-based approach 
 suggested above cannot be applied. For example, through a recursive argument related to the $\Omega$-sums that are due to Chu 
 and Zhang \cite{ChuZhang2014}, Chu \cite{Chu2020} proved that 
\begin{equation}\label{ChuRT512}
 21 \pi = \sum_{n = 0}^{\infty} 
 \left( -\frac{27}{512} \right)^{n} \left[ \begin{matrix} 
 1, \frac{2}{3}, \frac{4}{3} 
 \vspace{1mm} \\ 
 \frac{5}{4}, \frac{11}{8}, \frac{15}{8} 
 \end{matrix} \right]_{n} (77n+68). 
 \end{equation}
 Again with the use of recursions for $\Omega$-sums, Chu \cite{Chu2018} has proved formulas of Ramanujan type as in 
\begin{equation}\label{ChuRT28p15}
 \frac{21 \sqrt{2}}{2} = 
 \sum_{n = 0}^{\infty} 
 \left( -\frac{1}{27} \right)^{n} \left[ \begin{matrix} 
 \frac{1}{4}, \frac{3}{4}, \frac{3}{4} 
 \vspace{1mm} \\ 
 1, \frac{11}{12}, \frac{19}{12}
 \end{matrix} \right]_{n} (28n + 15)
 \end{equation}
 and 
\begin{equation}\label{ChuRT7p3}
 \frac{1036 \sqrt[3]{4} \pi }{999 \sqrt{3}} = 
 \sum_{n = 0}^{\infty} 
 \left( -\frac{1}{27} \right)^{n} \left[ \begin{matrix} 
 2, \frac{1}{3}, \frac{1}{6} 
 \vspace{1mm} \\ 
 \frac{10}{9}, \frac{13}{9}, \frac{16}{9} 
 \end{matrix} \right]_{n} (7n + 3), 
\end{equation}
 noting the highly irregular combinations of Pochhammer symbols involved in 
 \eqref{ChuRT28p15} and \eqref{ChuRT7p3}, with the formula 
\begin{equation}
 \frac{9 \sqrt{3}}{\sqrt[3]{4} \pi} = 
 \sum_{n = 0}^{\infty} 
 \left( -\frac{1}{27} \right)^{n} \left[ \begin{matrix} 
 \frac{1}{6}, \frac{1}{3}, \frac{2}{3} 
 \vspace{1mm} \\ 
 1, 1, 1 
 \end{matrix} \right]_{n} (21n + 2)
\end{equation}
 highlighted as a main result by Chu \cite{Chu2018} and having been described as being of \emph{Ramanujan type}. Through the use of 
 an acceleration method related to Dougall's summation theorem, Chen and Chu \cite{ChenChu2024}, in 2024, introduced and proved 
 formulas of Ramanujan type that are highlighted as main results in their work, including the formula 
\begin{equation}
 \frac{27 \sqrt{3} }{ \pi \sqrt[3]{32} } = 
 \sum_{n = 0}^{\infty} 
 \left( -\frac{1}{27} \right)^{n} \left[ \begin{matrix} 
 \frac{1}{3}, \frac{2}{3}, \frac{5}{6} 
 \vspace{1mm} \\ 
 1, 1, 1 
 \end{matrix} \right]_{n} (42 n +5). 
\end{equation}
 Chen and Chu \cite{ChenChu2021} also applied techniques relying on known $q$-series 
 to derive formulas of Ramanujan type such as 
 \begin{equation}\label{ChuRT10p9} 
 \frac{1024}{27(3 + \sqrt{3})} = 
 \sum_{n = 0}^{\infty} 
 \left( -\frac{1}{4} \right)^{n} \left[ \begin{matrix} 
 \frac{1}{2}, \frac{5}{4}, \frac{7}{4} 
 \vspace{1mm} \\ 
 \frac{4}{3}, \frac{5}{3}, 2 
 \end{matrix} \right]_{n} (10n+9). 
\end{equation}
 The discoveries due to Chu et al.\ that are reproduced via 
 \eqref{ChuRT512}--\eqref{ChuRT10p9} 
 motivate the new formulas of Ramanujan type that are higlighted in Section \ref{subsectionRamanujantype} 
 as main motivating results and that we prove via our iterative method. 

\subsection{Iterated accelerations and formulas of Ramanujan type}\label{subsectionRamanujantype}
  The arrangements of Pochhammer symbols appearing in our new formulas of Ramanujan  type are typically irregular, so that  
  classical methods (via the use of power series and the like)   cannot be applied in such cases.   Our previous method of \emph{shifted 
  indices} \cite{Campbellunpublished}  (cf.\ \cite{CampbellLevrie2024free}) was applied to obtain a number of new formulas of Ramanujan  
  type,   but our new and iterative method   appears to be much more versatile,   as evidenced by the many new formulas of Ramanujan  
 type   that we introduce below.  

 Our iterative approach has led us to discover and prove the relation such that 
 \begin{equation}\label{newRamanujantype1}
 3 \pi = \sum_{j = 0}^{\infty} \left( \frac{16}{27} \right)^{j} \left[ \begin{matrix} 
 \frac{1}{2}, 1, \frac{3}{2} 
 \vspace{1mm} \\ 
 \frac{5}{4}, \frac{4}{3}, \frac{5}{3} 
 \end{matrix} \right]_{j} 
 (11 j+4). 
 \end{equation} 
 To the best of our knowledge, the formula of Ramanujan type in   \eqref{newRamanujantype1} is new,  with regard to     previous  
  research related to Chu and Zhang's accelerations via $\Omega$-sums   
 \cite{ChenChu2021Guillera,ChenChu2021,ChenChu2024,Chu2023,Chu2018,Chu2020,Chu2021poised,Chu2021GouldHsu,Chu2019,Zhang2015}.   
  In view of Ramanujan's formula \cite[pp.\ 352--354]{Berndt1994} \cite{Ramanujan1914} 
\begin{equation}\label{27025707424210PM1A}
 \frac{8}{\pi} = 
 \sum_{k = 0}^{\infty} \left(-\frac{1}{4} \right)^{k} 
 \left[ \begin{matrix} 
 \frac{1}{4}, \frac{1}{2}, \frac{3}{4} \vspace{1mm} \\ 
 1, 1, 1 
 \end{matrix} \right]_{k} 
 \left( 20 k + 3 \right), 
\end{equation}
 the new formulas below that are of the same convergence rate and that 
 are of Ramanujan type may be seen as natural counterparts to \eqref{27025707424210PM1A}: 
\begin{align}
 8 \sqrt{2} & = 
 \sum_{j = 0}^{\infty} \left( -\frac{1}{4} \right)^{j} \left[ \begin{matrix} 
 \frac{5}{8}, \frac{3}{4}, \frac{9}{8} 
 \vspace{1mm} \\ 
 \frac{1}{2}, 1, \frac{3}{2} 
 \end{matrix} \right]_{j} (40 j+19), \label{newRT2} \\ 
 16 \sqrt{2} & = 
 \sum_{j = 0}^{\infty} \left( -\frac{1}{4} \right)^{j} \left[ \begin{matrix} 
 \frac{1}{8}, \frac{5}{8}, \frac{3}{4} 
 \vspace{1mm} \\ 
 1, \frac{3}{2}, \frac{3}{2}
 \end{matrix} \right]_{j} (40 j+23), \label{newRT3} \\ 
 \frac{8 \sqrt{2}}{5} 
 & = \sum_{j = 0}^{\infty} \left( -\frac{1}{4} \right)^{j} \left[ \begin{matrix} 
 \frac{5}{8}, \frac{3}{4}, \frac{9}{8} 
 \vspace{1mm} \\ 
 1, \frac{5}{4}, \frac{5}{4} 
 \end{matrix} \right]_{j} (8 j+3), \label{newRT4} \\ 
 \frac{96 \sqrt{2}}{7} & = 
 \sum_{j = 0}^{\infty} \left( -\frac{1}{4} \right)^{j} \left[ \begin{matrix} 
 \frac{5}{4}, \frac{11}{8}, \frac{15}{8} 
 \vspace{1mm} \\ 
 1, \frac{7}{4}, \frac{7}{4} 
 \end{matrix} \right]_{j} (40 j+33), \label{newRT5} \\ 
 \frac{416 \sqrt{2}}{81} & = 
 \sum_{j = 0}^{\infty} \left( -\frac{1}{4} \right)^{j} \left[ \begin{matrix} 
 \frac{7}{8}, \frac{5}{4}, \frac{11}{8} 
 \vspace{1mm} \\ 
 1, \frac{17}{12}, \frac{25}{12} 
 \end{matrix} \right]_{j} (8 j+9), \label{newRT6} \\ 
 \frac{56 \sqrt{2}}{3} & = 
 \sum_{j = 0}^{\infty} \left( -\frac{1}{4} \right)^{j} \left[ \begin{matrix} 
 \frac{1}{8}, \frac{5}{8}, \frac{3}{4} 
 \vspace{1mm} \\ 
 \frac{11}{12}, 1, \frac{19}{12} 
 \end{matrix} \right]_{j} (40 j+27), \label{newRT7} \\ 
 \frac{256 \sqrt{3}}{9} & = 
 \sum_{j = 0}^{\infty} \left( -\frac{1}{4} \right)^{j} \left[ \begin{matrix} 
 -\frac{1}{12}, \frac{5}{12}, \frac{5}{6} 
 \vspace{1mm} \\ 
 1, \frac{5}{3}, \frac{5}{3} 
 \end{matrix} \right]_{j} (60j+49), \label{newRT8} \\ 
 -\frac{\sqrt{3}}{2} & = 
 \sum_{j = 0}^{\infty} \left( -\frac{1}{4} \right)^{j} \left[ \begin{matrix} 
 \frac{1}{12}, \frac{1}{6}, \frac{7}{12} 
 \vspace{1mm} \\ 
 -\frac{2}{3}, \frac{1}{3}, 1 
 \end{matrix} \right]_{j} (60 j-1), \label{newRT9} \\ 
 16 \log (2) & = 
 \sum_{j = 0}^{\infty} \left( -\frac{1}{4} \right)^{j} \left[ \begin{matrix} 
 \frac{3}{4}, 1, \frac{5}{4} 
 \vspace{1mm} \\ 
 \frac{3}{2}, \frac{3}{2}, \frac{3}{2} 
 \end{matrix} \right]_{j} (20 j+13), \label{newRT10} \\ 
 \frac{44 \sqrt[3]{2}}{9} & = 
 \sum_{j = 0}^{\infty} \left( -\frac{1}{4} \right)^{j} \left[ \begin{matrix} 
 \frac{2}{3}, \frac{3}{4}, \frac{5}{4} 
 \vspace{1mm} \\ 
 \frac{5}{12}, 1, \frac{23}{12} 
 \end{matrix} \right]_{j} (10 j + 9), \label{newRT11} \\ 
 \frac{20 \sqrt[3]{2} }{3} & = 
 \sum_{j = 0}^{\infty} \left( -\frac{1}{4} \right)^{j} \left[ \begin{matrix} 
 \frac{1}{4}, \frac{2}{3}, \frac{3}{4} 
 \vspace{1mm} \\ 
 \frac{11}{12}, 1, \frac{17}{12} 
 \end{matrix} \right]_{j} (20j + 9). \label{newRT12} 
\end{align}
 If we consider our new formulas of Ramanujan type involving series of convergence rate $-\frac{1}{4}$ 
 in relation to Ramanujan's series of the same convergence rate in \eqref{27025707424210PM1A}, 
 and if we then consider the Ramanujan series of positive convergence rate $\frac{1}{4}$ 
 \cite[pp.\ 352--354]{Berndt1994} \cite{Ramanujan1914} such that 
\begin{equation}\label{Ramanujanposquarter}
 \frac{4}{\pi} = 
 \sum_{k = 0}^{\infty} \left( \frac{1}{4} \right)^{k} 
 \left[ \begin{matrix} 
 \frac{1}{2}, \frac{1}{2}, \frac{1}{2} \vspace{1mm} \\ 
 1, 1, 1 
 \end{matrix} \right]_{k} 
 (6k+1), 
\end{equation}
 this motivates how we have applied our iterative method
 to obtain the following formulas of Ramanujan type containing series of the same convergence rate 
 as that in \eqref{Ramanujanposquarter}. 
 The following results appear to be new: 
\begin{align}
 \frac{105 \pi }{32 \sqrt{3}} & = 
 \sum_{j = 0}^{\infty} \left( \frac{1}{4} \right)^{j} \left[ \begin{matrix} 
 \frac{5}{3}, 2, \frac{7}{3} 
 \vspace{1mm} \\ 
 \frac{11}{6}, \frac{13}{6}, \frac{5}{2} 
 \end{matrix} \right]_{j} 
 (3 j+4), \label{pquarterRT1} \\ 
 \frac{7\sqrt{2}}{8} & = 
 \sum_{j = 0}^{\infty} \left( \frac{1}{4} \right)^{j} \left[ \begin{matrix} 
 \frac{1}{2}, \frac{5}{4}, \frac{3}{2} 
 \vspace{1mm} \\ 
 1, \frac{11}{8}, \frac{15}{8} 
 \end{matrix} \right]_{j} (j + 1), \label{pquarterRT2} \\ 
 \frac{4301 \sqrt{3}}{256} & = 
 \sum_{j = 0}^{\infty} \left( \frac{1}{4} \right)^{j} \left[ \begin{matrix} 
 \frac{3}{2}, \frac{7}{3}, \frac{8}{3} 
 \vspace{1mm} \\ 
 1, \frac{29}{12}, \frac{35}{12} 
 \end{matrix} \right]_{j} 
 (9 j+16), \label{202505121101PM1A} \\ 
 \frac{935 \sqrt{3}}{128} & = 
 \sum_{j = 0}^{\infty} \left( \frac{1}{4} \right)^{j} \left[ \begin{matrix} 
 \frac{1}{2}, \frac{5}{3}, \frac{7}{3} 
 \vspace{1mm} \\ 
 1, \frac{23}{12}, \frac{29}{12}
 \end{matrix} \right]_{j} (9 j+10), \label{202505121118PM1A} \\ 
 \frac{4301 \sqrt{3}}{256} & = 
 \sum_{j = 0}^{\infty} \left( \frac{1}{4} \right)^{j} \left[ \begin{matrix} 
 \frac{3}{2}, \frac{7}{3}, \frac{8}{3} 
 \vspace{1mm} \\ 
 1, \frac{29}{12}, \frac{35}{12}
 \end{matrix} \right]_{j} 
 (9 j+16), \label{nRTsqrt39p16} \\ 
 \frac{1729 \sqrt{3}}{128} & = 
 \sum_{j = 0}^{\infty} \left( \frac{1}{4} \right)^{j} \left[ \begin{matrix} 
 \frac{3}{2}, \frac{5}{3}, \frac{7}{3} 
 \vspace{1mm} \\ 
 1, \frac{25}{12}, \frac{31}{12} 
 \end{matrix} \right]_{j} 
 (9 j + 14), \label{202505121131PM1A} \\
 \frac{27 \sqrt[3]{2} }{4} & = 
 \sum_{j = 0}^{\infty} \left( \frac{1}{4} \right)^{j} \left[ \begin{matrix} 
 \frac{2}{3}, \frac{5}{6}, \frac{7}{6} 
 \vspace{1mm} \\ 
 1, \frac{5}{4}, \frac{7}{4} 
 \end{matrix} \right]_{j} (9 j+7). \label{20250513745PM1AAM1A} 
\end{align}

 \ 

\noindent {\bf Further closed forms:} In regard to our applications of the iterative method in this paper, we mainly restrict our attention to 
 formulas for fundamental constants such as $\pi$ and $\sqrt{2}$, as in the motivating results listed above. However, in view of the 
 extent of research interest in the past work of Chu et al.\ on formulas of Ramanujan type, we also highlight the new rational and 
 algebraic closed-form evaluations that are listed below and that we have discovered and proved through our iterative method: 
\begin{align}
 \frac{21}{2} & = 
 \sum_{j = 0}^{\infty} \left( -\frac{1}{4} \right)^{j} \left[ \begin{matrix} 
 \frac{1}{3}, \frac{2}{3}, \frac{4}{3} 
 \vspace{1mm} \\ 
 \frac{13}{12}, \frac{19}{12}, 2 
 \end{matrix} \right]_{j} (15 j+11), \label{furtherclosed1} \\ 
 \frac{22}{3} & = 
 \sum_{j = 0}^{\infty} \left( -\frac{1}{4} \right)^{j} \left[ \begin{matrix} 
 -\frac{1}{3}, \frac{2}{3}, \frac{4}{3} 
 \vspace{1mm} \\ 
 1, \frac{17}{12}, \frac{23}{12} 
 \end{matrix} \right]_{j} (6 j+7), \label{furthernewRT2} \\ 
 7 \sqrt[3]{4} & = 
 \sum_{j = 0}^{\infty} \left( -\frac{1}{4} \right)^{j} \left[ \begin{matrix} 
 -\frac{1}{12}, \frac{1}{3}, \frac{5}{12} 
 \vspace{1mm} \\ 
 1, \frac{13}{12}, \frac{19}{12} 
 \end{matrix} \right]_{j} (60 j+11), \label{furthernewRT3} \\ 
 \frac{9 \sqrt[4]{8} }{5} & = 
 \sum_{j = 0}^{\infty} \left( -\frac{1}{4} \right)^{j} \left[ \begin{matrix} 
 -\frac{1}{16}, \frac{3}{8}, \frac{7}{16} 
 \vspace{1mm} \\ 
 1, \frac{17}{16}, \frac{25}{16} 
 \end{matrix} \right]_{j} (16 j+3), \label{furthernewRT4} \\
 \frac{91 \sqrt[3]{4}}{108} & = 
 \sum_{j = 0}^{\infty} \left( \frac{1}{4} \right)^{j} \left[ \begin{matrix} 
 \frac{5}{6}, \frac{4}{3}, \frac{3}{2} 
 \vspace{1mm} \\ 
 1, \frac{19}{12}, \frac{25}{12}
 \end{matrix} \right]_{j} (j+1), \label{20250513856} \\ 
 13 \sqrt[6]{32} & = 
 \sum_{j = 0}^{\infty} \left( -\frac{1}{4} \right)^{j} \left[ \begin{matrix} 
 -\frac{1}{24}, \frac{5}{12}, \frac{11}{24} 
 \vspace{1mm} \\ 
 1, \frac{25}{24}, \frac{37}{24} 
 \end{matrix} \right]_{j} (120 j+23), \label{20250420323PM1A} \\ 
 \frac{493 \sqrt[6]{2}}{432} & = 
 \sum_{j = 0}^{\infty} \left( \frac{1}{4} \right)^{j} \left[ \begin{matrix} 
 \frac{7}{12}, \frac{3}{2}, \frac{11}{6} 
 \vspace{1mm} \\ 
 1, \frac{41}{24}, \frac{53}{24} 
 \end{matrix} \right]_{j} 
 (j + 1), \label{20250513323AM1A} \\
 \frac{ 64 \sqrt[6]{108} }{3} & = 
 \sum_{j = 0}^{\infty} \left( -\frac{1}{4} \right)^{j} \left[ \begin{matrix} 
 \frac{1}{12}, \frac{7}{12}, \frac{5}{6} 
 \vspace{1mm} \\ 
 1, \frac{4}{3}, \frac{5}{3} 
 \end{matrix} \right]_{j} (60j+47), \label{fourth20250420} \\ 
 \frac{64 \sqrt[6]{432} }{27} & = 
 \sum_{j = 0}^{\infty} \left( -\frac{1}{4} \right)^{j} \left[ \begin{matrix} 
 \frac{3}{4}, \frac{5}{6}, \frac{5}{4} 
 \vspace{1mm} \\ 
 1, \frac{4}{3}, \frac{4}{3} 
 \end{matrix} \right]_{j} (20 j+9), \label{2025042529202242P2M2A} \\ 
 \frac{24}{\sqrt{2-\sqrt{2}}} & = 
 \sum_{j = 0}^{\infty} \left( -\frac{1}{4} \right)^{j} \left[ \begin{matrix} 
 \frac{3}{16}, \frac{5}{8}, \frac{11}{16} 
 \vspace{1mm} \\ 
 \frac{7}{8}, 1, \frac{11}{8} 
 \end{matrix} \right]_{j} (80 j+33), \label{20250508731AM1A} \\ 
 48 \sqrt{4-2 \sqrt{2}} & = 
 \sum_{j = 0}^{\infty} \left( -\frac{1}{4} \right)^{j} \left[ \begin{matrix} 
 \frac{15}{16}, \frac{9}{8}, \frac{23}{16} 
 \vspace{1mm} \\ 
 1, \frac{11}{8}, \frac{15}{8} 
 \end{matrix} \right]_{j} (80 j+69), \label{20250508747MSAMA1A} \\ 
 40 \sqrt{4-2 \sqrt{2}} & = 
 \sum_{j = 0}^{\infty} \left( -\frac{1}{4} \right)^{j} \left[ \begin{matrix} 
 \frac{9}{16}, \frac{7}{8}, \frac{17}{16} 
 \vspace{1mm} \\ 
 1, \frac{9}{8}, \frac{13}{8} 
 \end{matrix} \right]_{j} (80 j+51), \label{202354305038755A4M1A} \\ 
 \frac{48}{5} \sqrt{2 \sqrt{2}-2} & = 
 \sum_{j = 0}^{\infty} \left( -\frac{1}{4} \right)^{j} \left[ \begin{matrix} 
 \frac{3}{16}, \frac{11}{16}, \frac{3}{4} 
 \vspace{1mm} \\ 
 1, \frac{11}{8}, \frac{3}{2} 
 \end{matrix} \right]_{j} (16 j+9). \label{202501111508888088AM1A} 
\end{align}

\noindent Our new formulas of Ramanujan type for nested radicals as in 
 \eqref{20250508731AM1A}--\eqref{202501111508888088AM1A}
 may be seen as improving upon our past work 
 on series for nested radicals accelerated via the WZ method \cite{CampbellNested}, 
 since this past work did not introduce formulas of Ramanujan type. 
   The nested radical formulas among 
 \eqref{20250508731AM1A}--\eqref{202501111508888088AM1A} 
 may also be considered in relation to Ramanujan-inspired series due to Chu 
 involving nested radicals, as in the Chu series 
 \begin{equation*}
 \frac{32 \sqrt{2 - \sqrt{2}} }{\pi} = 
 \sum_{j = 0}^{\infty} \left( \frac{1}{4} \right)^{j} \left[ \begin{matrix} 
 \frac{1}{8}, \frac{1}{8}, \frac{7}{8}, \frac{7}{8} 
 \vspace{1mm} \\ 
 1, 1, 1, \frac{3}{2} 
 \end{matrix} \right]_{j} (192j^2 + 128 j + 7)
\end{equation*}
 introduced via Gould--Hsu relations \cite{Chu2021GouldHsu}, 
 and in relation to further research areas associated with nested radicals and $\pi$ formulas, 
 with reference to the work of Abrarov and Quine \cite{AbrarovQuine2018}. 

 \ 

\noindent {\bf Formulas of Bauer--Ramanujan type:} 
 Our iterative method may also be applied to obtain 
 formulas of Ramanujan type of the same convergence rate 
 as the Bauer--Ramanujan formula. 
 We refer to such formulas as being of \emph{Bauer--Ramanujan type}. 
 Despite the slower convergence rate of our new series in 
 \begin{equation}\label{notMapleBR0}
 \frac{8 \log (2)}{3} = 
 \sum_{j = 0}^{\infty} \left( -1 \right)^{j} \left[ \begin{matrix} 
 \frac{5}{6}, 1, \frac{7}{6} 
 \vspace{1mm} \\ 
 \frac{4}{3}, \frac{3}{2}, \frac{5}{3} 
 \end{matrix} \right]_{j} (4 j+3), 
\end{equation}
 the formula of Bauer--Ramanujan type in \eqref{notMapleBR0} appears to be new. 
 We have also discovered a way of applying our iterative method so as to obtain 
 formulas of Bauer--Ramanujan typer as in 
\begin{align}
 \frac{8 \sqrt{2}}{\pi } & = 
 \sum_{j = 0}^{\infty} \left( -1 \right)^{j} \left[ \begin{matrix} 
 \frac{1}{4}, \frac{5}{4}, \frac{5}{4} 
 \vspace{1mm} \\ 
 1, 1, 2 
 \end{matrix} \right]_{j} (8 j+5) \label{MapleBR1} 
\end{align}
 and 
\begin{align}
 \frac{8 \sqrt{2}}{\pi } = 
 \sum_{j = 0}^{\infty} \left( -1 \right)^{j} \left[ \begin{matrix} 
 -\frac{1}{4}, \frac{3}{4}, \frac{3}{4} 
 \vspace{1mm} \\ 
 1, 1, 2 
 \end{matrix} \right]_{j} (8 j+3), \label{MapleBR2} 
\end{align}
 but the Maple system can evaluate both of the series in \eqref{MapleBR1} and \eqref{MapleBR2}, 
 in contrast to how CAS software cannot evaluate any of the 
 series highlighted as Examples of our iteration method. 

 \ 

\noindent {\bf Recovered formulas of Ramanujan type:} Through our experimental use
 of the iterated acceleration method introduced in this paper, we have recovered the formulas 
 of Ramanujan type such that 
\begin{equation}\label{recover1}
 \frac{5 \pi }{8} = 
 \sum_{j = 0}^{\infty} \left( -\frac{1}{4} \right)^{j} \left[ \begin{matrix} 
 \frac{1}{4}, \frac{1}{4}, 1 
 \vspace{1mm} \\ 
 \frac{9}{8}, \frac{3}{2}, \frac{13}{8} 
 \end{matrix} \right]_{j} (5 j+2). 
\end{equation}
 and such that 
\begin{equation}\label{recoveranother}
 \frac{15 \pi }{4} = 
 \sum_{j = 0}^{\infty} \left( \frac{16}{27} \right)^{j} \left[ \begin{matrix} 
 \frac{1}{4}, \frac{3}{4}, 1 
 \vspace{1mm} \\ 
 \frac{1}{2}, \frac{7}{6}, \frac{11}{6} 
 \end{matrix} \right]_{j} (11 j+8), 
\end{equation}
 with \eqref{recover1} and \eqref{recoveranother} having been introduced
 and proved by Chu and Zhang \cite[Examples 7 and 47]{ChuZhang2014}
 via their acceleration method derived using Dougall's ${}_{5}F_{4}$-sum
 and an Abel-type summation formula. Moreover, 
 our iterated approach toward hypergeometric accelerations has led us to recover the formula 
\begin{equation}\label{third20250420}
 \frac{64 \sqrt{3}}{3} = 
 \sum_{j = 0}^{\infty} \left( -\frac{1}{4} \right)^{j} \left[ \begin{matrix} 
 \frac{5}{12}, \frac{5}{6}, \frac{11}{12}
 \vspace{1mm} \\ 
 \frac{2}{3}, 1, \frac{5}{3}
 \end{matrix} \right]_{j} (60j+43) 
\end{equation}
 of Ramanujan type introduced and proved by Chu 
 \cite[Example 92]{Chu2021poised} 
 through an acceleration method related to 
 what are referred to as very well-poised {$\Omega$}-sum. 
 This is representative of the versatility of our iterated acceleration method 
 and how it may be seen as ``unifying'' previously published results. 
 Our iterative method also allows us to construct a new proof of the formula 
 \begin{equation}\label{202540429935PM1A}
 \frac{3 \pi ^2}{2} = 
 \sum_{j = 0}^{\infty} \left( \frac{16}{27} \right)^{j} \left[ \begin{matrix} 
 1, 1, 1 
 \vspace{1mm} \\ 
 \frac{4}{3}, \frac{3}{2}, \frac{5}{3} 
 \end{matrix} \right]_{j} 
 (11 j+8). 
 \end{equation}
 proved by Zhang \cite{Zhang2015}
 and previously conjectured by Sun. 
 We also apply our iterative method to recover the formula 
\begin{equation}\label{FabryGuillera}
 \frac{\pi^2}{4} = 
 \sum_{j = 0}^{\infty} \left( \frac{1}{4} \right)^{j} \left[ \begin{matrix} 
 1, 1, 1 
 \vspace{1mm} \\ 
 \frac{3}{2}, \frac{3}{2}, \frac{3}2{} 
 \end{matrix} \right]_{j} (3j+2) 
 \end{equation}
 that was introduced by Fabry in 1911 \cite[p.\ 129]{Fabry1911} 
 and that was independently rediscovered by Guillera via the WZ method in 
 2008 \cite{Guillera2008}. 
 In view of the known derivations of $q$-analogues of equivalent versions of 
 the Fabry--Guillera formula in 
 \eqref{FabryGuillera} \cite{CampbellArchiv,ChenChu2021,HouKrattenthalerSun2019,WangZhong2023,Wei2020}, 
 the known $q$-analogues 
 of \eqref{FabryGuillera} 
 raise questions as to how our iterative method could be adapted so 
 as to obtain $q$-analogues for the remaining formulas of Ramanujan type given in this paper. 
 Through our iterative method, we have also discovered and proved the formula 
\begin{equation}\label{GuilleraGosper}
 \frac{9 \pi ^2}{16} = 
 \sum_{j = 0}^{\infty} \left( \frac{1}{4} \right)^{j} \left[ \begin{matrix} 
 1, 2, 3 
 \vspace{1mm} \\ 
 \frac{3}{2}, \frac{5}{2}, \frac{5}{2} 
 \end{matrix} \right]_{j} 
 (3 j+4) 
\end{equation}
 of Ramanujan type, which can be shown, via Gosper's algorithm, 
 to be eqiuvalent to the Fabry--Guillera formula. 
 Through a ternary version of our iterative method (described below), we have recovered the formula 
 \begin{equation}\label{202505191141AM1A}
 \frac{4\pi^2}{3} = 
 \sum_{j = 0}^{\infty} \left( \frac{1}{64} \right)^{j} \left[ \begin{matrix} 
 1, 1, 1 
 \vspace{1mm} \\ 
 \frac{3}{2}, \frac{3}{2}, \frac{3}{2} 
 \end{matrix} \right]_{j} 
 (21 j +13).
 \end{equation}
 of Ramanujan type proved by Guillera \cite{Guillera2008}. 

 We have may also applied our iterative method to prove formulas of Ramanujan type such as 
\begin{equation}\label{7202757075414347465837PM2A} 
 \frac{64}{3 \pi } = 
 \sum_{j = 0}^{\infty} \left( \frac{1}{4} \right)^{j} \left[ \begin{matrix} 
 \frac{1}{2}, \frac{3}{2}, \frac{5}{2} 
 \vspace{1mm} \\ 
 1, 2, 2 
 \end{matrix} \right]_{j} (6 j+5) 
 \end{equation}
 and 
 \begin{equation}\label{quasinewRT2p1}
 \frac{32}{3 \pi } = 
 \sum_{j = 0}^{\infty} \left( \frac{1}{4} \right)^{j} \left[ \begin{matrix} 
 \frac{3}{2}, \frac{3}{2}, \frac{3}{2} 
 \vspace{1mm} \\ 
 1, 1, 2 
 \end{matrix} \right]_{j} 
 (2j+1), 
 \end{equation}
 and these formulas can be shown, via Gosper's algorithm, 
 to be equivalent to Ramanujan's formula in \eqref{Ramanujanposquarter}, 
 and a simliar approach can be used to obtain equivalent versions of Ramanujan's series of convergence
 rate $-\frac{1}{4}$. 
 Such equivalences can also be established using 
 the evaluation for the generating function for 
 the sequence of cubed central binomial coefficients in terms of the complete elliptic integral of the first kind, 
 and by using the modular relations 
 underlying the derivation of Ramanujan's expansions for $\frac{1}{\pi}$, 
 again with reference to the classic monograph on $\pi$ and the AGM due to the Borwein brothers 
 \cite{BorweinBorwein1987}, and referring the interested reader to Cooper's text 
 on Ramanujan's theta functions \cite{Cooper2017} for related background material. 
 As expressed above, elliptic theory-based or theta function-based approaches cannot be applied 
 to series involving irregular combinations of Pochhammer symbols, and such series are a main 
 object of study in our work. 

\section{An iterative approach}\label{sectioniterative}
 In our previous work on two-term hypergeometric recurrences \cite{CampbellLevrie2024free}, 
 this concerned building upon an acceleration method due to Wilf \cite{Wilf1999} 
 by applying Zeilberger's algorithm 
 to bivariate hypergeometric functions involving multiple free parameters, such as 
\begin{equation}\label{207275707424733PM1A}
 F(n, k) := \left[ \begin{matrix} 
 a, b \vspace{1mm} \\ 
 n, n
 \end{matrix} \right]_{k}, 
\end{equation}
 in the hope that the inputting $F$-function would yield, according to Zeilberger's algorithm, 
 a two-term, inhomogeneous recurrence satisfying certain additional conditions. For the sake of brevity, 
 we omit a full discussion on the technical details and background material 
 concerning Wilf's method \cite{Wilf1999}, inviting the interested reader 
 to review our past works on this method \cite{Campbellunpublished,CampbellLevrie2024free}. 
 As a follow-up to our applications of Wilf's method using input functions as in \eqref{207275707424733PM1A}, 
 we then considered how this method could be further extended with the use of \emph{shifted indices} 
 of Pochhammer symbols \cite{Campbellunpublished}, 
 so as to be applicable to input functions such as 
\begin{equation}\label{27072775707570797775777P7M71A}
 F(n, k) := \frac{ (a)_{k+f} (b)_{k+e} }{ (n)_{k+d} (n)_{k + c} }. 
\end{equation}
 This method of shifted indices has been shown to be powerful, and has been applied 
 \cite{Campbellunpublished} to produce formulas of Ramanujan type of a similar nature
 relative to the new results highlighted in Section \ref{subsectionRamanujantype}. 
 This leads us to take Wilf's method even further, by extending the method of shifted indices
 using an \emph{iterative} approach described below. Our past results concerning Wilf's method are special
 cases of our new and iterative approach described below. 

\subsection{The iteration pattern (1, 1)}
 The iterative approach described as follows is inspired by 
 what Chu et al.\ have described as an \emph{iteration pattern} 
 given by expressing 
 $\Omega(a;b,c,d,e)$ in terms of 
 $\Omega(a+\rho_a; b + \rho_b, c + \rho_c, d + \rho+d, e + \rho_e)$, where, for 
 $\Re(1 + 2a - b - c - d - e) > 0$, we write 
 \begin{equation*} 
 \Omega(a;b,c,d,e) := 
 \sum_{k = 0}^{\infty} (a + 2 k) \left[ \begin{matrix} 
 b, c, d, e 
 \vspace{1mm} \\ 
 1 + a - b, 1 + a - c, 1 + a - d, 1 + a - e
 \end{matrix} \right]_{k}, 
\end{equation*}
 with this acceleration method having been pioneered
 in the above referenced Chu--Zhang 
 contribution Chu and Zhang \cite{ChuZhang2014} 
 on the acceleration of Dougall's {${}_5F_4$}-summation. 
 As expressed in our previous work \cite{Campbellunpublished}
 our acceleration techniques relying on Zeilberger's algorithm may be seen as more versatile, 
 in the sense that it does not rely on summands for $\Omega$-sums of the above form. 

 Let $ F(n, k, a) $ 
 be hypergeometric when interpreted as function of $n$ and $k$, for a fixed parameter $a$. 
 Suppose that Zeilberger's algorithm applied to $ F(n, k, a) $ 
 produces a two-term recurrence. For the sake of clarity, we let this recurrence be a first-order recurrence, 
 but our iterative method may be applied more broadly, writing
 $$ p_{1}(n, a) F(n + 1, k, a) + p_{2}(n, a) F(n, k, a) = G(n, k + 1, a) - G(n, k, a) $$
 for polynomials $p_{1}(n, a)$ and $p_{2}(n, a)$ in $n$ (again for a parameter $a$)
 and for a function $G(n, k, a)$ that is hypergeometric with respect to $n$ and $k$, 
 writing $G(n, k, a) = R(n, k, a) F(n, k, a)$ for a rational function $R(n, k, a)$ in $n$ and $k$. 
 We also assume that the input function $F(n, k, a)$ is such that 
 $\lim_{k \to \infty} G(n, k+1, a)$ vanishes (given specified bounds for $n$ and $a$). With this assumption, by writing 
\begin{equation}\label{202505036541PM1A}
 \mathfrak{f}(n, a) := \sum_{k=0}^{\infty} F(n, k, a), 
\end{equation}
 we find that a telescoping phenomenon 
 obtained from the above difference equation gives us that 
\begin{equation}\label{telescopep1p2}
 p_{1}(n, a) \, \mathfrak{f}(n+1, a) + p_{2}(n, a) \, \mathfrak{f}(n, a) = - G(n, 0, a). 
\end{equation}
 By setting $g_{1}(n, a) := -\frac{G(n, 0, a)}{p_{2}(n, a)}$
 and $g_{2}(n, a) := -\frac{p_{1}(n ,a)}{p_{2}(n, a)}$, we proceed to rewrite the difference equation in 
 \eqref{telescopep1p2} so that 
\begin{equation}\label{iteration1c0}
 \mathfrak{f}(n, a) = g_{1}(n, a) + g_{2}(n, a) \, \mathfrak{f}(n+1, a). 
\end{equation}
 We also suppose that $F(n, k, a)$, when viewed as bivariate function of $a$ and $k$, 
 is hypergeometric, letting $n$ be seen as a fixed parameter. We write 
\begin{equation}\label{mathcalFdefine}
 \mathcal{F}(n, k) := F(m, k, n)
\end{equation}
 for a parameter $m$. For the sake of clarity, we suppose that the application of 
 Zeilberger's algorithm to 
 $\mathcal{F}(n, k)$ produces a first-order recurrence, but, again, 
 our iterative method may be applied more broadly. 

 Let $\mathcal{F}(n, k)$ satisfy 
\begin{equation}\label{20250580838385838AMA1A}
 q_{3}(n, m) \, \mathcal{F}(n+1, k) + q_{4}(n, m) \, \mathcal{F}(n, k) 
 = \mathcal{H}(n, k+1) - \mathcal{H}(n, k) 
\end{equation}
 according to Zeilberger's algorithm, for polynomials $q_{3}(n, m)$ and $q_{4}(n ,m)$
 in $n$ and for a hypergeometric function $\mathcal{H}(n, k)$
 satisfying $\mathcal{H}(n$, $k)$ $=$ $ \mathcal{S}(n$, $k) $ $ \mathcal{F}(n$, $k)$ 
 for a rational function $\mathcal{S}(n, k)$. We then rewrite \eqref{20250580838385838AMA1A} so that 
\begin{equation*}
 q_{3}(n, m) \, F(m, k, n + 1) + q_{4}(n, m) \, F(m, k, n) 
 = \mathcal{H}(n, k+1) - \mathcal{H}(n, k), 
\end{equation*}
 and we write $\mathcal{H}(n, k) = \mathcal{H}(n, k, m)$ 
 and $\mathcal{S}(n, k) = \mathcal{S}(n, k, m)$. 
 We then write 
 $ p_{3}(n, a) = q_{3}(a, n)$ and 
 $ p_{4}(n, a) = q_{4}(a, n)$ and 
 $ \mathcal{G}(n, k, a) = \mathcal{H}(a, k, n)$, yielding 
\begin{equation}\label{20250509407PM1A}
 p_{3}(n, a) F(n, k, a + 1) + p_{4}(n, a) F(n, k, a) 
 = \mathcal{G}(n, k + 1, a) - \mathcal{G}(n, k, a), 
\end{equation}
 and we also write 
 $$ F(n, k, a) = \mathcal{R}(n, k, a) \mathcal{G}(n, k, a). $$
 By applying the summation operator $\sum_{k=0}^{\infty} \cdot$ to both sides, 
 we assume that the input function $F(n, k, a)$ is such that a telescoping phenomenon 
 in conjunction with a vanishing condition give us that 
\begin{equation}\label{2022502520236525PM2A}
 p_{3}(n, a) \, \mathfrak{f}(n, a+1) + p_{4}(n, a) \, \mathfrak{f}(n, a) 
 = - \mathcal{G}(n, 0, a), 
\end{equation}
 for $\mathfrak{f}$ as specified in \eqref{202505036541PM1A}. 
 By then setting $g_{3}(n, a) = -\frac{\mathcal{G}(n, 0, a)}{p_{4}(n, a)}$
 and $g_{4}(n, a) = -\frac{p_{3}(n, a)}{p_{4}(n, a)}$, we may obtain from \eqref{2022502520236525PM2A} that 
\begin{equation}\label{2120255505505536555855PM5A}
 \mathfrak{f}(n, a) 
 = g_{3}(n, a) + g_{4}(n, a) \, \mathfrak{f}(n, a+1) 
\end{equation}
 This leads us to apply the $\mathfrak{f}$-recursions among 
 \eqref{iteration1c0} and \eqref{2120255505505536555855PM5A} in an iterative manner, 
 i.e., by applying the recursion in \eqref{2120255505505536555855PM5A}
 using the right-hand side of \eqref{iteration1c0}. 
 Explicitly, by setting 
 $g_{5}(n, a) = g_{1}(n, a) + g_{2}(n, a) g_{3}(n+1, a)$
 and $g_{6}(n, a) = g_{2}(n, a) g_{4}(n+1, a)$, we find that 
\begin{equation}\label{202505067425PM2A}
 \mathfrak{f}(n, a) 
 = g_{5}(n, a) + g_{6}(n, a) \, \mathfrak{f}(n + 1, a + 1). 
\end{equation}
 Since we have expressed 
 $ \mathfrak{f}(n, a) $ in terms of 
 $ \mathfrak{f}(n + 1, a + 1)$ via the two-term recursion in 
 \eqref{202505067425PM2A}, we say that this recursion is of iteration pattern $(1, 1)$, 
 by analogy with the above referenced work of Chu concerning iteration patterns. 
 We are to apply recursions of the form indicated in \eqref{202505067425PM2A}, 
 as below, to obtain series accelerations, 
 and we later consider generalizations and variants of 
 our derivation of 
 \eqref{202505067425PM2A}, so as to obtain iterations of with patterns other than $(1, 1)$. 

 The repeated application of the recursion in \eqref{202505067425PM2A} yields 
\begin{multline*}
 \mathfrak{f}(n, a) = \sum_{j=0}^{m} \left( \prod_{i=0}^{j-1} g_{6}(n+i, a+i) \right) g_{5}(n+j, a+j) + \\ 
 \left( \prod_{i=0}^{m} g_{6}(n+i, a+i) \right) \mathfrak{f}(n+m+1, a+m+1) 
\end{multline*}
 for every positive integer $m$. We assume that the input function $F(n, k, a)$
 is such that the latter term in the above expansion for $ \mathfrak{f}(n, a) $
 vanishes after we set $m \to \infty$. The rate of acceleration on the right-hand side of 
\begin{equation}\label{20250507454PM1A}
 \mathfrak{f}(n, a) = \sum_{j = 0}^{\infty} \left( \prod_{i=0}^{j-1} g_{6}(n+i, a+i) \right) g_{5}(n+j, a+j) 
\end{equation}
 is determined by the coefficients involved in the $g_{6}$-expression, 
 and the identity in \eqref{20250507454PM1A} provides a key identity in regard to our iterative acceleration method 
 introduced and applied in this paper. 
 In addition to the new formulas of Ramanujan type that we have discovered 
 via \eqref{20250507454PM1A} and that are highlighted above, 
 new and motivating formulas for $\pi$ that are have discovered and proved
 through our iterative method are listed in Section 
 \ref{207777777727570747247273727AM1A} below. 

\subsection{Motivating $\pi$ formulas}\label{207777777727570747247273727AM1A}
 Since our current research contribution is based on extending 
 the method of shifted indices in our past work \cite{Campbellunpublished} (cf.\ \cite{CampbellLevrie2024free}), 
 we organize our results from our extended method in a compatible way, 
 with regard to the following listing of 
 new $\pi$ formulas highlighted as motivating results. 
 Each of the new $\pi$ formulas below 
 was obtained through the iterative method put forth in this paper, noting the irregular combinations 
 of Pochhammer symbols. For background related to 
 the importance of the development of $\pi$ formulas in the history of mathematics, 
 we refer to the classic monograph on $\pi$ and the AGM \cite{BorweinBorwein1987} 
 and further texts related to the history of $\pi$ formulas \cite{ArndtHaenel2001,Beckmann1971,BerggrenBorweinBorwein2004}. 
\begin{align}
 \frac{91 \pi }{18} & = 
 \sum_{j = 0}^{\infty} \left( \frac{1}{4} \right)^{j} \left[ \begin{matrix} 
 1, \frac{7}{6}, \frac{7}{6} 
 \vspace{1mm} \\ 
 \frac{5}{6}, \frac{19}{12}, \frac{25}{12} 
 \end{matrix} \right]_{j} 
 (18 j^2+25 j+6), \label{20727570757173310AM1A} \\
 \frac{280 \pi }{27 \sqrt{3}} & = 
 \sum_{j = 0}^{\infty} \left( -\frac{1}{4} \right)^{j} \left[ \begin{matrix} 
 1, \frac{3}{2} 
 \vspace{1mm} \\ 
 \frac{11}{6}, \frac{13}{6} 
 \end{matrix} \right]_{j} (30 j^2+58 j+27), \label{208828588084828984898P8M1A} \\ 
 \frac{1575 \pi }{32} & = 
 \sum_{j = 0}^{\infty} \left( \frac{16}{27} \right)^{j} \left[ \begin{matrix} 
 \frac{5}{4}, \frac{7}{4}, 2 
 \vspace{1mm} \\ 
 \frac{11}{6}, \frac{13}{6}, \frac{7}{2} 
 \end{matrix} \right]_{j} 
 (22 j^2+93 j+80), \label{2025043072226PM2A} \\ 
 \frac{21 \pi }{8} & = 
 \sum_{j = 0}^{\infty} \left( -\frac{1}{4} \right)^{j} \left[ \begin{matrix} 
 \frac{3}{4}, \frac{3}{4}, 1 
 \vspace{1mm} \\ 
 \frac{11}{8}, \frac{3}{2}, \frac{15}{8} 
 \end{matrix} \right]_{j} (20 j^2+29 j+10), \label{motia2025015128P2Mtin} \\ 
 -\frac{8 \pi }{3 \sqrt{3}} & = 
 \sum_{j = 0}^{\infty} \left( \frac{4}{27} \right)^{j} \left[ \begin{matrix} 
 -\frac{2}{3}, -\frac{1}{6} 
 \vspace{1mm} \\ 
 \frac{3}{2}, 1 
 \end{matrix} \right]_{j} 
 \frac{414 j^2+249 j-26}{(3 j+1) (6 j+5)}, \label{20250428911PM1A} \\ 
 -\frac{\pi }{8 \sqrt{2}} & = 
 \sum_{j = 0}^{\infty} \left( -\frac{1}{4} \right)^{j} \left[ \begin{matrix} 
 \frac{1}{4} 
 \vspace{1mm} \\ 
 \frac{1}{2} 
 \end{matrix} \right]_{j} \frac{160 j^3+92 j^2+5 j+1}{(4 j-1) (8 j+1) (8 j+5)}, \label{20250423motivating452four1} \\ 
 \frac{9 \pi ^2}{2} & = 
 \sum_{j = 0}^{\infty} \left( \frac{4}{27} \right)^{j} \left[ \begin{matrix} 
 \frac{1}{2}, 1, 1, 1 
 \vspace{1mm} \\ 
 \frac{5}{4}, \frac{4}{3}, \frac{5}{3}, \frac{7}{4} 
 \end{matrix} \right]_{j} 
 (92 j^2+123 j+40), \label{20250401333A111111M1A} \\
 \frac{55 \pi }{8 \sqrt{3}} & = 
 \sum_{j = 0}^{\infty} \left( -\frac{1}{4} \right)^{j} \left[ \begin{matrix} 
 \frac{5}{6}, \frac{5}{6}, 1, \frac{4}{3} 
 \vspace{1mm} \\ 
 \frac{7}{6}, \frac{17}{12}, \frac{3}{2}, \frac{23}{12} 
 \end{matrix} \right]_{j} (30 j^2+45 j+16), \label{20250423motivatingfive523} \\ 
 \frac{1024}{\pi } & = 
 \sum_{j = 0}^{\infty} \left( \frac{27}{64} \right)^{j} \left[ \begin{matrix} 
 \frac{3}{4}, \frac{5}{6}, \frac{7}{6}, \frac{5}{4} 
 \vspace{1mm} \\ 
 1, 1, \frac{3}{2}, 2 
 \end{matrix} \right]_{j} 
 (296 j^2+410 j+105), \label{20250429148AM1A} \\ 
 6 \pi ^2 & = 
 \sum_{j = 0}^{\infty} \left( \frac{4}{729} \right)^{j} \left[ \begin{matrix} 
 \frac{1}{2}, 1, 1, 1 
 \vspace{1mm} \\ 
 \frac{4}{3}, \frac{4}{3}, \frac{5}{3}, \frac{5}{3} 
 \end{matrix} \right]_{j} 
 (145 j^2+186 j+59), \label{20250651650146PM1A} \\ 
 \frac{5 \pi }{2} & = 
 \sum_{j = 0}^{\infty} \left( -\frac{1}{4} \right)^{j} \left[ \begin{matrix} 
 \frac{1}{2}, \frac{1}{2} 
 \vspace{1mm} \\ 
 \frac{9}{8}, \frac{13}{8} 
 \end{matrix} \right]_{j} \frac{80 j^3+168 j^2+115 j+26}{(j+1) (4 j+3)}, \label{20250323516motivating} \\ 
 \frac{6561 \sqrt{3}}{56 \pi } & = 
 \sum_{j = 0}^{\infty} \left( -\frac{1}{27} \right)^{j} \left[ \begin{matrix} 
 \frac{1}{6}, \frac{2}{3}, \frac{2}{3}, \frac{5}{3}, \frac{13}{6} 
 \vspace{1mm} \\ 
 1, \frac{3}{2}, \frac{3}{2}, 2, 2 
 \end{matrix} \right]_{j} (126 j^2+186 j+65). \label{motiv27bating20250423526} 
\end{align}

\subsection{Organization}
 Formulas of Ramanujan type may be regarded as ideal in terms of the (non-constant) polynomial factor 
 being minimal (i.e., linear). It is only in highly exceptional cases that 
 series satisfying the conditions associated with \eqref{generalRT} 
 admit closed forms, especially for irregular combinations of Pochhammer symbols, 
 and hence our emphasis on our new formulas of Ramanujan type. 
 This leads us to put a premium 
 on closed forms for series with polynomial factors that may be regarded as minimal, 
 and, hence, we restrict our attention to polynomial factors of degree not exceeding $3$. 
 Moreover, we disregard the cubic case, with the exception of 
 the case whereby 
 the number of Pochhammer symbols is minimal relative to formulas of Ramanujan type, 
 as in the new $\pi$ formulas 
 \begin{equation}\label{2707250757071727576AM1A}
 \frac{20 \pi }{\sqrt{3}} = 
 \sum_{j = 0}^{\infty} \left( -\frac{1}{4} \right)^{j} \left[ \begin{matrix} 
 \frac{2}{3} 
 \vspace{1mm} \\ 
 \frac{11}{6} 
 \end{matrix} \right]_{j} \frac{90 j^3+219 j^2+165 j+38}{(j+1) (2 j+1) (3 j+1)} 
\end{equation}
 and 
\begin{equation}\label{280828505808183018A8M8A}
 \frac{165 \pi}{8 \sqrt{3}} = 
 \sum_{j = 0}^{\infty} \left( -\frac{1}{4} \right)^{j} \left[ \begin{matrix} 
 \frac{1}{3}, \frac{5}{3} 
 \vspace{1mm} \\ 
 \frac{17}{12}, \frac{23}{12}
 \end{matrix} \right]_{j} \frac{90 j^3+231 j^2+174 j+38}{(j+1) (2 j+1) (6 j+1)} 
\end{equation} 
 that we highlight as applications of our iterative method, 
 and we also make an exception 
 for series for $\frac{1}{\pi}$ (which 
 seems to be rarer in ways that reflect the interest in Ramanujan's series for $\frac{1}{\pi}$), 
 as in the $\pi$ formula 
\begin{equation}\label{202504050130610000}
 \frac{192}{\pi } = 
 \sum_{j = 0}^{\infty} \left( \frac{4}{27} \right)^{j} \left[ \begin{matrix} 
 \frac{1}{4}, \frac{3}{4}, \frac{3}{4}, \frac{5}{4}, \frac{3}{2} 
 \vspace{1mm} \\ 
 1, 1, \frac{4}{3}, \frac{5}{3}, 2 
 \end{matrix} \right]_{j} 
 (368 j^3+704 j^2+378 j+45) 
\end{equation}
 introduced in this paper. 

 In view of the past literature referenced above on 
 fast converging $\pi$ formulas, equalities of this form are a 
 main subject of interest in this paper. As such, we put less of an emphasis on 
 formulas for constants not involving $\pi$, and we only include such formulas 
 if the series are such that the number of upper/lower Pochhammer symbols 
 is not exceeding that for formulas of Ramanujan type. 
 For example, the formula 
 \begin{equation}\label{20250513916AM1A}
 8 \sqrt[3]{2} = 
 \sum_{j = 0}^{\infty} \left( -\frac{1}{4} \right)^{j} \left[ \begin{matrix} 
 \frac{1}{3}, \frac{5}{6} 
 \vspace{1mm} \\ 
 1, \frac{5}{3} 
 \end{matrix} \right]_{j} (15 j + 11) 
\end{equation}
 satisfies the given conditions and may be considered in relation to the definition of a formula of Ramanujan type.

 Through extensive computer searches through hundreds of thousands of combinations 
 of rational parameter values inputted into our acceleration identities 
 highlighted as Theorems below, we have obtained 
 the 100 accelerated series that are highlighted as Examples
 below and that are for universal constants such as $\pi$, $\frac{1}{\pi}$, $\sqrt{2}$, and $\log 2$. 

\section{Accelerations yielding formulas of Ramanujan type}\label{sectionyielding}
 We begin by applying our iterative method
 in such a way so as to obtain the new formulas of Ramanujan type highlighted above. 
 To begin with, we consider what may be seen as a natural variant $F(n, k, a)$, as in \eqref{20250509423P7M7A}
 below, of the hypergeometric functions in 
 \eqref{207275707424733PM1A} and 
 \eqref{27072775707570797775777P7M71A}. 
 As we later demonstrate, Theorem \ref{firstmaintheorem} below can be used to 
 prove new formulas of Ramanujan type, as in 
 \eqref{newRT2} and \eqref{newRT3}. 

\begin{theorem}\label{firstmaintheorem}
 The acceleration identity in \eqref{20250507454PM1A} of iteration pattern $(1, 1)$ holds 
 for the input function 
\begin{equation}\label{20250509423P7M7A}
 F(n, k, a) := \frac{ (a)_{k + f} (b)_{k + e} }{ (n+a)_{k+d} (n)_{k+c} }. 
\end{equation}
\end{theorem}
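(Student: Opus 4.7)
The plan is to verify, step by step, the hypotheses of the framework of Section~\ref{sectioniterative} for the specific bivariate hypergeometric family $F(n,k,a)$ defined in \eqref{20250509423P7M7A}: namely, that Zeilberger's algorithm returns a first-order inhomogeneous recurrence in each of the two required orientations, that the attendant boundary terms vanish, and that the remainder in the iterated form of \eqref{202505067425PM2A} tends to zero.

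First I would run Zeilberger's algorithm on $F(n,k,a)$ regarded as a hypergeometric function of $n$ and $k$ with $a,b,c,d,e,f$ held fixed. Because $n$ appears only in the two denominator Pochhammer symbols $(n+a)_{k+d}$ and $(n)_{k+c}$, the consecutive ratio $F(n+1,k,a)/F(n,k,a)$ is a rational function of $n$ and $k$ of modest degree, so one expects the algorithm to succeed at order one, producing polynomials $p_{1}(n,a)$, $p_{2}(n,a)$ and a certificate rational function $R(n,k,a)$ with $G=RF$. Summing the resulting telescoping identity over $k\ge 0$, and checking that the parameter choices make $\mathfrak{f}(n,a)$ convergent and $\lim_{k\to\infty}G(n,k+1,a)=0$, produces the one-step recursion \eqref{iteration1c0}.

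Next I would exploit the dual symmetry that $a$ and $n$ enter \eqref{20250509423P7M7A} in a structurally parallel way: $a$ sits in the numerator Pochhammer symbol $(a)_{k+f}$ and in the denominator symbol $(n+a)_{k+d}$, while $n$ sits in $(n)_{k+c}$ and also in $(n+a)_{k+d}$. Consequently, $\mathcal{F}(n,k):=F(m,k,n)$ is again hypergeometric of the same general shape, and a second application of Zeilberger's algorithm at order one yields \eqref{20250580838385838AMA1A}. Relabelling variables exactly as prescribed in the passage leading to \eqref{20250509407PM1A}, summing over $k$, and verifying the corresponding vanishing condition furnishes the second recursion \eqref{2120255505505536555855PM5A}. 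Composing \eqref{iteration1c0} and \eqref{2120255505505536555855PM5A} in the manner prescribed in Section~\ref{sectioniterative} then yields the pattern-$(1,1)$ recurrence \eqref{202505067425PM2A}.

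The main obstacle, and the final step, is to show that the remainder
\[
\Bigl(\prod_{i=0}^{m}g_{6}(n+i,a+i)\Bigr)\mathfrak{f}(n+m+1,a+m+1)
\]
vanishes as $m\to\infty$, so that the finite expansion of \eqref{202505067425PM2A} collapses to the infinite series \eqref{20250507454PM1A}. Here one must analyse both factors: the explicit form of $g_{6}=g_{2}(n,a)g_{4}(n+1,a)$, as a rational function in $n$ and $a$ read off from Zeilberger's output, is expected to simplify to a ratio of Pochhammer products whose shift pattern matches $(n+a)_{k+d}(n)_{k+c}$ in the denominator of $F$, so that the product $\prod_{i=0}^{m}g_{6}$ decays at the rate forced by the indices $c$ and $d$; meanwhile, $\mathfrak{f}(n+m+1,a+m+1)\to 0$ by standard bounds on Pochhammer symbols with large argument. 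Pinning down this decay quantitatively, together with the inequalities on $b,c,d,e,f$ that make every step simultaneously valid, is the delicate bookkeeping that will occupy the bulk of the proof.
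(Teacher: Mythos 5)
Your proposal follows essentially the same route as the paper: apply Zeilberger's algorithm to $F(n,k,a)$ in the $(n,k)$ orientation and then to $\mathcal{F}(n,k)=F(m,k,n)$ in the $(a,k)$ orientation, compose the two resulting first-order recursions into the pattern-$(1,1)$ recurrence \eqref{202505067425PM2A}, and pass to the limit to obtain \eqref{20250507454PM1A}. The only substantive content of the paper's proof that your outline defers is the explicit exhibition of the rational certificates $R$ and $\mathcal{R}$ and the polynomial coefficients $p_{1},\dots,p_{4}$ --- that is, the verification that Zeilberger's algorithm really does return \emph{first-order} (rather than higher-order) recurrences for this particular input in both orientations, which is the one non-automatic point and is exactly what you flag with ``one expects the algorithm to succeed at order one''; conversely, the tail-vanishing analysis you single out as the main obstacle is not carried out in any detail in the paper either.
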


\begin{proof}
 Applying Zeilberger's algorithm to $F(n, k) = F(n, k ,a)$ as specified, by treating $a$ as a fixed parameter, 
 we obtain a first-order recurrence of the desired form, with, according to the notation in 
 Section \ref{sectioniterative}, 
 the rational certificate 

 \ 

\noindent $ R(n, k, a) 
 = -n (a + n) \big( a d-a f+a n-b c-b d+b f-b k-2 b n+b+c^2+c d-c e-c f+c k+3 c n-c+d^2-d e-d f+d k+3 d n-d + 
 e f-e k-2 e n+e-f k-2 f n+f+2 k n+3 n^2-2 n \big)$. 

 \ 

\noindent and with polynomial coefficients 

 \ 

\noindent $ p_{1}(n, a) = (d-f+n) (a-c+f-n) (-b+c-e+n) (a-b+d-e+n) $ 

 \ 

\noindent and 

 \ 

\noindent $ p_{2}(n, a) = n (a+n) (b-c-d+e+f-2 n) (b-c-d+e+f-2 n+1). $

 \ 

\noindent Setting $\mathcal{F}(n, k)$ as specified in \eqref{mathcalFdefine}, we again obtain a first-order recurrence
 of the desired form, for a secondary rational certificate 

 \ 

\noindent $ \mathcal{R}(n, k, a) = (n+a) (c+k+n-1) $ 

 \ 

\noindent and for polynomials 

 \ 

\noindent $ p_{3}(n, a) = a (b-d+e-n-a) $ 

 \ 

\noindent and 

 \ 

\noindent $ p_{4}(n, a) = (n+a) (-c+f-n+a+1)$, 

 \ 

\noindent giving us that the first-order difference equation in \eqref{20250509407PM1A} 
 is satisfied, 
 for $p_{3}$ and $p_{4}$ and $\mathcal{G}$ as specified in 
 Section \ref{sectioniterative}. We thus obtain the desired recurrence 
 in \eqref{202505067425PM2A} of iteration pattern $(1, 1)$, 
 and we may thus verify that the desired acceleration identity 
\begin{equation}\label{2202225202520292421212PM2A}
 \mathfrak{f}(n, a) = \sum_{j = 0}^{\infty} \left( \prod_{i=0}^{j-1} g_{6}(n+i, a+i) \right) g_{5}(n+j, a+j) 
\end{equation}
 holds for $g_{5}$ and $g_{6}$ as specified in Section \ref{sectioniterative}
 and based on the input function in \eqref{20250509423P7M7A}, 
 again writing $ \mathfrak{f}(n, a) := \sum_{k=0}^{\infty} F(n, k, a)$. 
\end{proof}

 Inputting specific parameter values into the acceleration identity in 
 \eqref{2202225202520292421212PM2A} provides a versatile way of providing 
 accelerated series for universal constants, as demonstrated below. 
 Typically, if $ \mathfrak{f}(n, a)$ admits a closed form 
 for a given combination of rational values for the parameters 
 among the entries in the tuple $(a, b, c, d, e, f, n)$, 
 then it is a matter of routine to use classical techniques to prove 
 this closed form for $ \mathfrak{f}(n, a)$, since 
 $ \mathfrak{f}(n, a)$ is, up to a combination of Pochhammer symbols, a ${}_{3}F_{2}(1)$-hypergeometric series, 
 referring to Bailey's text \cite{Bailey1964} 
 on generalized hypergeometric series for background on 
 classical results on ${}_{3}F_{2}(1)$-identities. 
 So, we omit presenting derivations of the $ \mathfrak{f}(n, a)$-evaluations in 
 the below applications of Theorem \ref{firstmaintheorem}. 

\begin{example}
 Setting $(a, b, c, d, e, f, n) = 
 ( \frac{1}{3}, \frac{1}{3}, \frac{2}{3}, 0, -\frac{1}{2}, \frac{1}{2}, \frac{2}{3} )$ in Theorem \ref{firstmaintheorem}, we obtain 
 \begin{equation*} 
 \frac{64 \sqrt{3} }{9} = 
 \sum_{j = 0}^{\infty} \left( -\frac{1}{4} \right)^{j} \left[ \begin{matrix} 
 \frac{1}{6}, \frac{7}{12}, \frac{13}{12}
 \vspace{1mm} \\ 
 1, \frac{4}{3}, \frac{4}{3} 
 \end{matrix} \right]_{j} (120 j^2+94 j+15). 
\end{equation*}
%US1
%US2
%US3
%US4
%US5
%US6
%US7
%US8
\end{example}

\begin{example}
 Setting $(a, b, c, d, e, f, n) = 
 ( \frac{1}{6}, \frac{1}{6}, \frac{3}{4}, \frac{1}{4}, 0, 0, \frac{1}{4} )$ in Theorem \ref{firstmaintheorem}, we obtain 
 \begin{equation*} 
 \frac{128 \sqrt{3}}{9} = 
 \sum_{j = 0}^{\infty} \left( -\frac{1}{4} \right)^{j} \left[ \begin{matrix} 
 \frac{1}{4}, \frac{1}{2}, \frac{3}{4} 
 \vspace{1mm} \\ 
 1, \frac{4}{3}, \frac{5}{3} 
 \end{matrix} \right]_{j} (120 j^2+118 j+27) . 
\end{equation*}
%US1
%US2
%US3
%US4
%US5
%US6
%US7
%US8
\end{example}

\begin{example}
 Setting $(a, b, c, d, e, f, n) = 
 ( -\frac{1}{12}, -\frac{1}{12}, -\frac{1}{12}, \frac{1}{6}, -\frac{1}{12}, -\frac{1}{12}, \frac{11}{12} )$ in Theorem \ref{firstmaintheorem}, we obtain 
 \begin{equation*} 
 -\frac{\pi }{3} = 
 \sum_{j = 0}^{\infty} \left( -\frac{1}{4} \right)^{j} \left[ \begin{matrix} 
 \frac{7}{6} 
 \vspace{1mm} \\ 
 \frac{3}{2} 
 \end{matrix} \right]_{j} \frac{360 j^3+726 j^2+439 j+85}{(6 j-1) (12 j+7) (12 j+13)}. 
\end{equation*}
%US1
%US2
%US3
%US4
%US5
%US6
%US7
%US8
\end{example}

\begin{example}
 Setting $(a, b, c, d, e, f, n) = 
 ( \frac{1}{2}, \frac{1}{2}, \frac{1}{2}, \frac{3}{2}, -\frac{1}{4}, \frac{1}{4}, \frac{1}{2} )$ in Theorem \ref{firstmaintheorem}, we obtain 
 \begin{equation*} 
 160 \sqrt{2} = 
 \sum_{j = 0}^{\infty} \left( -\frac{1}{4} \right)^{j} \left[ \begin{matrix} 
 \frac{3}{4}, \frac{9}{8}, \frac{13}{8} 
 \vspace{1mm} \\ 
 1, \frac{9}{4}, \frac{9}{4} 
 \end{matrix} \right]_{j} (160 j^2+420 j+273). 
\end{equation*}
%US1
%US2
%US3
%US4
%US5
%US6
%US7
%US8
\end{example}

\begin{example}
 Setting $(a, b, c, d, e, f, n) = 
 ( \frac{1}{3}, \frac{1}{3}, \frac{2}{3}, 0, -\frac{1}{2}, \frac{1}{3}, \frac{2}{3} )$ in Theorem \ref{firstmaintheorem}, we obtain 
 \begin{equation*} 
 \frac{110 \sqrt[3]{2}}{3} = 
 \sum_{j = 0}^{\infty} \left( -\frac{1}{4} \right)^{j} \left[ \begin{matrix} 
 \frac{7}{12}, \frac{2}{3}, \frac{13}{12} 
 \vspace{1mm} \\ 
 1, \frac{17}{12}, \frac{23}{12} 
 \end{matrix} \right]_{j} (120 j^2+170 j+57).
\end{equation*}
%US1
%US2
%US3
%US4
%US5
%US6
%US7
%US8
\end{example}

\begin{example}
 Setting $(a, b, c, d, e, f, n) = 
 ( -\frac{1}{2}, -\frac{1}{2}, \frac{1}{4}, -\frac{1}{4}, \frac{1}{3}, -\frac{1}{3}, \frac{3}{4} )$ in Theorem \ref{firstmaintheorem}, we obtain 
\begin{align*}
 \frac{216}{\pi } & = 
 \sum_{j = 0}^{\infty} \left( -\frac{1}{4} \right)^{j} \left[ \begin{matrix} 
 -\frac{5}{6}, \frac{1}{12}, \frac{7}{12}, \frac{5}{6}, \frac{7}{6} 
 \vspace{1mm} \\ 
 \frac{1}{2}, 1, 1, \frac{3}{2}, 2 
 \end{matrix} \right]_{j} (2160 j^3+2556 j^2+552 j+35). 
\end{align*}
%US1
%US2
%US3
%US4
%US5
%US6
%US7
%US8
\end{example}

\begin{example}
 Setting $(a, b, c, d, e, f, n) = 
 ( \frac{1}{3}, \frac{2}{3}, \frac{1}{3}, 0, -\frac{1}{2}, -\frac{1}{2}, \frac{2}{3} )$ in Theorem \ref{firstmaintheorem}, we obtain 
\begin{equation*}
 \frac{1296}{\pi } = 
 \sum_{j = 0}^{\infty} \left( -\frac{1}{4} \right)^{j} \left[ \begin{matrix} 
 -\frac{1}{6}, \frac{5}{12}, \frac{5}{6}, \frac{11}{12}, \frac{7}{6} 
 \vspace{1mm} \\ 
 1, 1, \frac{3}{2}, \frac{3}{2}, 2 
 \end{matrix} \right]_{j} (2160 j^3+4140 j^2+2316 j+385). 
\end{equation*}
%US1
%US2
%US3
%US4
%US5
%US6
%US7
%US8
\end{example}

\begin{example}
 Setting $(a, b, c, d, e, f, n) = 
 ( \frac{1}{2}, \frac{1}{2}, -\frac{1}{2}, 0, \frac{1}{4}, -\frac{1}{4}, \frac{3}{2} )$ in Theorem \ref{firstmaintheorem}, we obtain 
 \begin{equation*} 
 \frac{2048 \sqrt{2}}{3 \pi } = 
 \sum_{j = 0}^{\infty} \left( -\frac{1}{4} \right)^{j} \left[ \begin{matrix} 
 \frac{1}{4}, \frac{1}{4}, \frac{5}{8}, \frac{9}{8}, \frac{7}{4} 
 \vspace{1mm} \\ 
 1, \frac{3}{2}, \frac{3}{2}, 2, 2 
 \end{matrix} \right]_{j} (640 j^3+1712 j^2+1360 j+315). 
\end{equation*}
%US1
%US2
%US3
%US4
%US5
%US6
%US7
%US8
\end{example}

\begin{example}
 Setting $(a, b, c, d, e, f, n) = 
 ( \frac{1}{2}, \frac{1}{2}, -\frac{1}{2}, 0, -\frac{1}{4}, \frac{1}{4}, \frac{3}{2} )$ in Theorem \ref{firstmaintheorem}, we obtain 
 \begin{equation*} 
 \frac{2048 \sqrt{2}}{3 \pi } = 
 \sum_{j = 0}^{\infty} \left( -\frac{1}{4} \right)^{j} \left[ \begin{matrix} 
 \frac{3}{4}, \frac{3}{4}, \frac{7}{8}, \frac{5}{4}, \frac{11}{8} 
 \vspace{1mm} \\ 
 1, \frac{3}{2}, \frac{3}{2}, 2, 2 
 \end{matrix} \right]_{j} (640 j^3+1680 j^2+1424 j+385). 
\end{equation*}
%US1
%US2
%US3
%US4
%US5
%US6
%US7
%US8
\end{example}

\begin{example}
 Setting $(a, b, c, d, e, f, n) = 
 ( -\frac{1}{2}, -\frac{1}{2}, -\frac{1}{4}, \frac{3}{4}, \frac{1}{4}, \frac{3}{4}, \frac{3}{4} )$ in Theorem \ref{firstmaintheorem}, we obtain 
 the formula of Ramanujan type highlighted in \eqref{newRT2}. 
\end{example}

\begin{example}
 Setting $(a, b, c, d, e, f, n) = 
 ( -\frac{1}{2}, \frac{1}{2}, \frac{3}{4}, \frac{3}{4}, \frac{1}{4}, \frac{3}{4}, \frac{3}{4} )$ in Theorem \ref{firstmaintheorem}, we obtain 
 the formula of Ramanujan type highlighted in \eqref{newRT3}.
\end{example}

\begin{example}
 Setting $(a, b, c, d, e, f, n) = 
 ( \frac{1}{2}, \frac{1}{2}, \frac{1}{2}, \frac{1}{2}, -\frac{1}{4}, \frac{1}{4}, \frac{1}{2} )$ in Theorem \ref{firstmaintheorem}, we obtain 
 the formula of Ramanujan type highlighted in \eqref{newRT4}. 
\end{example}

\begin{example}
 Setting $(a, b, c, d, e, f, n) = 
 ( -\frac{1}{12}, -\frac{1}{12}, 0, \frac{11}{12}, \frac{1}{6}, \frac{2}{3}, 1 )$ in Theorem \ref{firstmaintheorem}, we obtain 
 the formula of Ramanujan type highlighted in \eqref{newRT6}. 
\end{example}

\begin{example}
 Setting $(a, b, c, d, e, f, n) = 
 ( -\frac{1}{12}, -\frac{1}{12}, 0, -\frac{1}{12}, \frac{2}{3}, \frac{1}{6}, 1 )$ in Theorem \ref{firstmaintheorem}, we obtain 
 the formula of Ramanujan type highlighted in \eqref{newRT7}. 
\end{example}

\begin{example}
 Setting $(a, b, c, d, e, f, n) = 
 ( \frac{1}{6}, \frac{5}{6}, \frac{5}{6}, 0, \frac{1}{3}, 0, \frac{5}{6} )$ in Theorem \ref{firstmaintheorem}, we obtain 
 the formula of Ramanujan type highlighted in \eqref{newRT8}. 
\end{example}

\begin{example}
 Setting $(a, b, c, d, e, f, n) = 
 ( \frac{1}{3}, \frac{1}{3}, \frac{1}{3}, \frac{5}{6}, 0, \frac{5}{6}, \frac{2}{3} )$ in Theorem \ref{firstmaintheorem}, we obtain 
 the formula of Ramanujan type in \eqref{newRT11}. 
\end{example}

\begin{example}
 Setting $(a, b, c, d, e, f, n) = \big( \frac{1}{6}, \frac{1}{6}, \frac{1}{3}, 0, \frac{1}{6}, 0, \frac{2}{3} \big)$ 
 in Theorem \ref{firstmaintheorem}, 
 we obtain the formula of Ramanujan type in \eqref{newRT12}. 
\end{example}

\begin{example}
 Setting $(a, b, c, d, e, f, n) = 
 ( \frac{1}{6}, \frac{5}{6}, \frac{5}{6}, 0, \frac{1}{3}, \frac{1}{6}, \frac{5}{6} )$ in Theorem \ref{firstmaintheorem}, we obtain 
 the formula of Ramanujan type shown in \eqref{furthernewRT3}. 
\end{example}

\begin{example}
 Setting $(a, b, c, d, e, f, n) = 
 ( \frac{1}{6}, \frac{5}{6}, \frac{3}{4}, 0, \frac{1}{4}, \frac{1}{4}, \frac{5}{6} )$ in Theorem \ref{firstmaintheorem}, we obtain 
 the formula of Ramanujan type highlighted in \eqref{20250420323PM1A}. 
\end{example}

 \begin{example}
 Setting $(a, b, c, d, e, f, n) = 
 ( \frac{1}{6}, \frac{1}{6}, \frac{2}{3}, \frac{1}{6}, \frac{2}{3}, 0, \frac{2}{3} )$ in Theorem \ref{firstmaintheorem}, we obtain 
 the formula of Ramanujan type shown in \eqref{fourth20250420}. 
\end{example}

\begin{example}
 Setting $(a, b, c, d, e, f, n) = 
 ( -\frac{1}{8}, -\frac{1}{8}, 0, -\frac{1}{8}, \frac{1}{2}, \frac{1}{4}, 1 )$ in Theorem \ref{firstmaintheorem}, we obtain 
 the formula of Ramanujan type highlighted in \eqref{20250508731AM1A}. 
\end{example}

\begin{example}
 Setting $(a, b, c, d, e, f, n) = 
 ( -\frac{1}{8}, -\frac{1}{8}, 0, \frac{7}{8}, 0, \frac{3}{4}, 1 )$ in Theorem \ref{firstmaintheorem}, we obtain 
 the formula of Ramanujan type highlighted in \eqref{20250508747MSAMA1A}.
\end{example}

\begin{example}
 Setting $(a, b, c, d, e, f, n) = 
 ( -\frac{1}{8}, -\frac{1}{8}, 0, \frac{7}{8}, \frac{3}{4}, 1, 1 )$ in Theorem \ref{firstmaintheorem}, we obtain 
 the formula of Ramanujan type highlighted in \eqref{202354305038755A4M1A}. 
\end{example}

\begin{example}
 Setting $(a, b, c, d, e, f, n) = 
 ( -\frac{1}{8}, -\frac{1}{8}, \frac{3}{8}, \frac{1}{8}, \frac{3}{4}, \frac{3}{8}, 1 )$ in Theorem \ref{firstmaintheorem}, we obtain 
 the formula of Ramanujan type highlighted in \eqref{202501111508888088AM1A}. 
\end{example}

\begin{example}
 Setting $(a, b, c, d, e, f, n) = 
 ( \frac{1}{2}, \frac{1}{2}, \frac{3}{2}, \frac{1}{2}, \frac{1}{4}, -\frac{1}{4}, \frac{1}{4} )$ in Theorem \ref{firstmaintheorem}, we recover the formula
 in \eqref{recover1}
 of Ramanujan type introduced and proved by Chu and Zhang \cite[Example 7]{ChuZhang2014}. 
\end{example}

\begin{example}
 Setting $(a, b, c, d, e, f, n) = 
 ( \frac{1}{6}, \frac{1}{6}, 0, \frac{1}{6}, 0, 0, \frac{2}{3} )$ in Theorem \ref{firstmaintheorem}, we recover the formula in \eqref{third20250420} 
 of Ramanujan type introduced and proved by Chu \cite[Example 92]{Chu2021poised}. 
\end{example}

\begin{example}
 Setting $(a, b, c, d, e, f, n) = 
 ( \frac{1}{2}, \frac{1}{2}, \frac{3}{2}, \frac{3}{2}, -\frac{1}{4}, \frac{1}{4}, -\frac{1}{4} )$ in Theorem \ref{firstmaintheorem}, we obtain 
 the motivating $\pi$ formula shown in \eqref{motia2025015128P2Mtin}. 
\end{example}

\begin{example}
 Setting $(a, b, c, d, e, f, n) = 
 ( \frac{1}{6}, \frac{1}{6}, \frac{1}{3}, \frac{5}{6}, 0, \frac{2}{3}, \frac{5}{6} )$ in Theorem \ref{firstmaintheorem}, we obtain 
 the motivating $\pi$ formula shown in \eqref{20250423motivatingfive523}. 
\end{example}

\begin{example}
 Setting $(a, b, c, d, e, f, n) = 
 ( \frac{1}{2}, \frac{1}{2}, 1, \frac{3}{4}, \frac{1}{2}, 1, \frac{3}{4} )$ in Theorem \ref{firstmaintheorem}, we obtain 
 the motivating $\pi$ formula shown in \eqref{20250323516motivating}. 
\end{example}

\begin{example}
 Setting $(a, b, c, d, e, f, n) = 
 ( \frac{1}{2}, \frac{1}{2}, \frac{1}{2}, \frac{2}{3}, \frac{1}{2}, \frac{1}{6}, \frac{5}{6} )$ in Theorem \ref{firstmaintheorem}, we obtain 
 the motivating $\pi$ formula highlighted in \eqref{2707250757071727576AM1A}. 
\end{example}

\begin{example}
 Setting $(a, b, c, d, e, f, n) = 
 ( \frac{1}{3}, \frac{1}{3}, \frac{1}{3}, \frac{5}{6}, \frac{2}{3}, 0, \frac{5}{6} )$ in Theorem \ref{firstmaintheorem}, we obtain 
 the motivating $\pi$ formula highlighted in \eqref{280828505808183018A8M8A}. 
\end{example}

\begin{example}\label{cube15j}
 Setting $(a, b, c, d, e, f, n) = 
 ( \frac{1}{6}, \frac{1}{6}, \frac{1}{6}, \frac{1}{6}, \frac{1}{6}, 0, \frac{2}{3} )$ in Theorem \ref{firstmaintheorem}, 
 we obtain the motivating result highlighted in \eqref{20250513916AM1A}. 
\end{example}

\begin{remark}
 Apart from our recovered formulas of Ramanujan type, 
 our iterative method also allows us to recover many further accelerated 
 series from the work of Chu et al., 
 but, for the most part, we have neglected to include such recovered series, for the sake of brevity.
 For example, 
 setting $(a$, $b$, $c$, $d$, $e$, $f$, $n) $ $ = $
 $ ( \frac{1}{6}$, 
 $ \frac{1}{6}$, $ \frac{2}{3}$, $ \frac{5}{6}$, 
 $ \frac{1}{6}$, 
 $ \frac{1}{6}$, 
 $ \frac{1}{3} )$ in Theorem \ref{firstmaintheorem}, we 
 have independently rediscovered the formula 
 \begin{equation*} 
 \frac{40 \pi }{9 \sqrt{3}} = 
 \sum_{j = 0}^{\infty} \left( -\frac{1}{4} \right)^{j} \left[ \begin{matrix} 
 \frac{1}{2}, 1 
 \vspace{1mm} \\ 
 \frac{7}{6}, \frac{11}{6}
 \end{matrix} \right]_{j} (10 j+9) 
\end{equation*}
 that was previously given by Chu \cite[Example 21]{Chu2021poised}, 
 and, by setting 
 $(a$, $b$, $c$, $d$, $e$, $f$, $n) $ $ = $
 $ ( -\frac{1}{8}$, 
 $ -\frac{1}{8}$, 
 $ \frac{7}{8}$, 
 $ \frac{1}{4}$,
 $ \frac{7}{8}$, 
 $ \frac{7}{8}$, 
 $ \frac{7}{8} )$ in Theorem \ref{firstmaintheorem}, we 
 have independently rediscovered the formula 
 \begin{equation*} 
 \frac{\pi }{2 \sqrt{2}} = 
 \sum_{j = 0}^{\infty} \left( -\frac{1}{4} \right)^{j} \left[ \begin{matrix} 
 \frac{1}{4} 
 \vspace{1mm} \\ 
 \frac{3}{2} 
 \end{matrix} \right]_{j} 
 \frac{160 j^3+220 j^2+101 j+17}{(4 j+3) (8 j+1) (8 j+5)}
\end{equation*}
 given in an equivalent way by Chu 
 \cite[(B19)]{Chu2021GouldHsu}. 
\end{remark}

 For the same input function as in \eqref{20250509423P7M7A}, 
 by again deriving the recursions in \eqref{iteration1c0}
 and \eqref{2120255505505536555855PM5A}, 
 and by then applying this latter recursion twice so as to obtain a recursion of the form 
\begin{equation}\label{2120qq2q55q5q0q5q5qq0q5q5q3q6q5q5q5q8q5q5qPqM5qA}
 \mathfrak{f}(n, a) 
 = \overline{g_{3}}(n, a) + \overline{g_{4}}(n, a) \, \mathfrak{f}(n, a+2) 
\end{equation}
 for modified polynomial coefficients 
 $ \overline{g_{3}}$ and $\overline{g_{4}}$, 
 by then applying \eqref{iteration1c0} 
 in conjunction with \eqref{2120qq2q55q5q0q5q5qq0q5q5q3q6q5q5q5q8q5q5qPqM5qA}, 
 we obtain a recurrence of the form
\begin{equation}\label{202q5q0q5q0qqq6q7q4q2qqq5qPqMqq2A}
 \mathfrak{f}(n, a) 
 = \overline{g_{5}}(n, a) + \overline{g_{6}}(n, a) \, \mathfrak{f}(n + 1, a + 2), 
\end{equation}
 leaving the details to the reader. 
 The recurrence in \eqref{202q5q0q5q0qqq6q7q4q2qqq5qPqMqq2A} is said to be 
 of iteration pattern $(1, 2)$, by analogy with our derivation of 
 the recurrence in \eqref{202505067425PM2A} of iteration pattern $(1, 1)$. 
 Through the repeated application of \eqref{202q5q0q5q0qqq6q7q4q2qqq5qPqMqq2A} 
 to yield an analogue 
\begin{equation}\label{220w2w2w252w02w5w2w0w2w9w2w42w1w2w1w2wPwMw2A}
 \mathfrak{f}(n, a) = \sum_{j = 0}^{\infty} \left( \prod_{i=0}^{j-1} \overline{g_{6}}(n + i, a + 2i) \right) \overline{g_{5}}(n + j, a + 2j) 
\end{equation}
 of the acceleration in \eqref{2202225202520292421212PM2A}. 
 The identity in \eqref{220w2w2w252w02w5w2w0w2w9w2w42w1w2w1w2wPwMw2A}
 has led us to discover a new formula of Bauer--Ramanujan type, as below. 

\begin{example}
 Setting $(a, b, c, d, e, f, n) = 
 ( \frac{1}{2}, \frac{1}{2}, \frac{1}{2},
 \frac{1}{2}, 0, \frac{1}{2}, 1 )$ in \eqref{220w2w2w252w02w5w2w0w2w9w2w42w1w2w1w2wPwMw2A}, we obtain 
 the formula of Bauer--Ramanujan type highlighted in \eqref{notMapleBR0}. 
\end{example}

\begin{remark}
 By setting 
\begin{equation}\label{inputBR}
 F(n, k, a) := \frac{ (a)_{k + f} (b)_{k + e} }{ (n)_{k+d} (n)_{k+c}}, 
\end{equation}
 the input function in \eqref{inputBR} satisfies the desired conditions from Section 
 \ref{sectioniterative}, and this can be used to prove the formulas of Bauer--Ramanujan type
 shown in \eqref{MapleBR1} and \eqref{MapleBR2}, 
 by setting 
 $(a, b, c, d, e, f, n) = 
 ( -\frac{1}{4}, -\frac{1}{4}, \frac{1}{4}, \frac{1}{4}, 0, \frac{1}{2}, \frac{3}{4} )$
 and $(a, b, c, d, e, f, n) = 
 ( -\frac{1}{4}, -\frac{1}{4}, \frac{1}{4}, \frac{1}{4}, \frac{1}{2}, 0, \frac{3}{4} )$. 
 However, 
 as noted above, Maple can evaluate the series in 
 \eqref{MapleBR1} and \eqref{MapleBR2}, whereas CAS software such as Maple
 cannot evaluate any of the series given in the Examples 
 of our iteration method. 
\end{remark}

\subsection{Iterative index shifting}
 Observe that the argument of the lower Pochhammer expression $ (n+a)_{k+d}$
 in the input function in \eqref{20250509423P7M7A} is \emph{mixed} 
 in that it is given by a linear combination of the arguments $n$ and $a$ of 
 $\mathfrak{f}(n, a)$ with nonzero coefficients. 
 This leads us to consider a variant of \eqref{20250509423P7M7A}, 
 so that an argument of a $\mathfrak{f}(n, a)$-series 
 obtained through our acceleration method 
 and corresponding to an input function $F(n, k, a)$ would appear in the index of a Pochhammer symbol 
 in $F(n, k, a)$, and hence the term \emph{iterative index shifting}. 
 As we later demonstrate, Theorem \ref{secondmaintheorem}
 can be used to prove new formulas of Ramanujan type
 as in \eqref{furtherclosed1}. 

\begin{theorem}\label{secondmaintheorem}
 The acceleration identity in \eqref{20250507454PM1A} of iteration pattern $(1, 1)$ holds 
 for the input function 
\begin{equation}\label{20q2q5q0q5q0q9q4q2q3qPqq7qMq7A}
 F(n, k, a) := \frac{ (a)_{k + f} (b)_{k + e} }{ (n)_{k + d + a} (n)_{k+c}}. 
\end{equation}
\end{theorem}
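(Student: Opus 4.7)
The plan is to follow the template already established in the proof of Theorem \ref{firstmaintheorem}. First I would observe that the input function in \eqref{20q2q5q0q5q0q9q4q2q3qPqq7qMq7A} remains hypergeometric in $n$ and $k$ for fixed $a$: the ratio $F(n+1,k,a)/F(n,k,a)$ simplifies, via $\Gamma(x+1) = x \Gamma(x)$, to $\frac{n}{n+k+d+a} \cdot \frac{n}{n+k+c}$, which is rational in both $n$ and $k$. I would then apply Zeilberger's algorithm to $F(n,k,a)$ to produce a first-order inhomogeneous recurrence $p_{1}(n,a) F(n+1,k,a) + p_{2}(n,a) F(n,k,a) = G(n,k+1,a) - G(n,k,a)$ with a rational certificate $R(n,k,a)$ satisfying $G = R \cdot F$. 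Telescoping in $k$, under the assumption that $G(n,k+1,a) \to 0$ as $k \to \infty$, yields the $n$-recursion \eqref{iteration1c0}.

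Next I would set $\mathcal{F}(n,k) := F(m,k,n) = \frac{(n)_{k+f}(b)_{k+e}}{(m)_{k+d+n}(m)_{k+c}}$ and verify that it is hypergeometric in $n$: the ratio $\mathcal{F}(n+1,k)/\mathcal{F}(n,k)$ reduces to $\frac{n+k+f}{n\,(m+k+d+n)}$, which is rational in $n$, so Zeilberger's algorithm again applies and produces \eqref{20250580838385838AMA1A}. Relabeling the pair $(n,m) \leftrightarrow (a,n)$ converts this into \eqref{20250509407PM1A}, and a second $k$-telescoping, under the analogous vanishing of $\mathcal{G}(n,k+1,a)$, gives the $a$-recursion \eqref{2120255505505536555855PM5A}. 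Substituting the $a$-recursion into the $n$-recursion then delivers the iteration pattern $(1,1)$ identity \eqref{202505067425PM2A}, and iterating $m$ times while sending $m \to \infty$ yields \eqref{20250507454PM1A}, provided that the tail $\left( \prod_{i=0}^{m} g_{6}(n+i, a+i) \right) \mathfrak{f}(n+m+1, a+m+1)$ vanishes.

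The main obstacle is not the mechanical application of Zeilberger's algorithm but the bookkeeping of the three vanishing conditions in the shifted-index setting: because $a$ now appears in the \emph{subscript} of the Pochhammer symbol $(n)_{k+d+a}$ rather than in its argument, the explicit polynomial coefficients $p_{1},p_{2},p_{3},p_{4}$ and the certificates $R$, $\mathcal{S}$ differ substantively from those displayed in the proof of Theorem \ref{firstmaintheorem}, and the asymptotic behavior of $g_{6}(n+i, a+i)$ changes accordingly. The valid parameter ranges for which both the two $k$-telescopings and the iterative tail converge must therefore be tracked explicitly, and those ranges must accommodate the rational tuples $(a,b,c,d,e,f,n)$ to be substituted in the subsequent Examples such as the one yielding \eqref{furtherclosed1}. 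Once these vanishing conditions are confirmed, the remainder of the derivation is purely algebraic and concludes exactly as in Theorem \ref{firstmaintheorem}.
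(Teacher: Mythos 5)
Your proposal is correct and follows essentially the same route as the paper's proof: apply Zeilberger's algorithm first in $n$ (with $a$ fixed), then in $a$ via the relabelled function $\mathcal{F}(n,k)=F(m,k,n)$, telescope each recurrence in $k$, and compose the two two-term recursions into the iteration pattern $(1,1)$ identity. The only difference is that the paper records the explicit rational certificates and the polynomial coefficients $p_{1},\dots,p_{4}$ produced by Zeilberger's algorithm, whereas you leave them implicit (and, reasonably, flag the vanishing conditions that the paper itself treats as assumptions).
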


\begin{proof}
 Applying Zeilberger's algorithm to \eqref{20q2q5q0q5q0q9q4q2q3qPqq7qMq7A}, 
 we obtain a first-order recurrence of the desired form, with a rational certificate 

 \ 

\noindent $R(n, k, a) = n^2 \big(-a d+a f-a n+b c+b d-b f+b k+2 b n-b-c^2-c d+c e+c f-c k-3 c n+c-d^2+d e+d f-d k-3 d n+d-e f+e k+2 e n-e+f k+2 f n-f-2 k n-3 n^2+2
 n \big)$

 \ 

\noindent and with polynomial coefficients 

 \ 

\noindent $ p_{1}(n, a) = (d-f+n) (a-c+f-n) (-b+c-e+n) (a-b+d-e+n) $ 

 \ 

\noindent and 

 \ 

\noindent $ p_{2}(n, a) = n^2 (b-c-d+e+f-2 n) (b-c-d+e+f-2 n+1)$.

 \ 

\noindent Applying Zeilberger's algorithm to $\mathcal{F}(n, k)$ as specified in 
 \eqref{mathcalFdefine}, this leads to, again being consistent with the notation in Section 
 \ref{sectioniterative}, the secondary rational certificate 

 \ 

\noindent $ \mathcal{R}(n, k, a) = c+k+n-1 $ 

 \ 

\noindent together with the polynomial coefficients 

 \ 

\noindent $ p_{3}(n, a) = a (-a+b-d+e-n) $ 

 \ 

\noindent and 

 \ 

\noindent $ p_{4}(n, a) = a-c+f-n+1$, 

 \ 

\noindent in such a way so that a first-order recurrence of iteration pattern $(1, 1)$ of the desired form holds true, 
 again leading to a series acceleration identity of the form 
\begin{equation*}
 \mathfrak{f}(n, a) = \sum_{j = 0}^{\infty} \left( \prod_{i=0}^{j-1} g_{6}(n+i, a+i) \right) g_{5}(n+j, a+j) 
\end{equation*}
 for $g_{5}$ and $g_{6}$ and $\mathfrak{f}$ as specified in Section \ref{sectioniterative}. 
\end{proof}

\begin{example}
 Setting $(a, b, c, d, e, f, n) = 
 ( -\frac{1}{2}, -\frac{1}{2}, 0, \frac{3}{4}, \frac{3}{4}, \frac{3}{4}, \frac{3}{4} )$ in Theorem \ref{secondmaintheorem}, 
 we obtain 
 \begin{equation*} 
 10 \sqrt{2} = 
 \sum_{j = 0}^{\infty} \left( -\frac{1}{4} \right)^{j} \left[ \begin{matrix} 
 \frac{1}{4}, \frac{3}{8}, \frac{7}{8} 
 \vspace{1mm} \\ 
 1, \frac{9}{8}, \frac{13}{8} 
 \end{matrix} \right]_{j} (160 j^2+132 j+17). 
\end{equation*}
\end{example}

\begin{example}
 Setting $(a, b, c, d, e, f, n) = 
 ( -\frac{1}{6}, -\frac{1}{6}, 1, \frac{1}{2}, -\frac{1}{6}, \frac{5}{6}, \frac{2}{3} )$ in Theorem \ref{secondmaintheorem}, 
 we obtain 
 \begin{equation*} 
 \frac{4 \pi }{9 \sqrt{3}} = 
 \sum_{j = 0}^{\infty} \left( -\frac{1}{4} \right)^{j} \left[ \begin{matrix} 
 \frac{1}{3} 
 \vspace{1mm} \\ 
 \frac{3}{2} 
 \end{matrix} \right]_{j} \frac{(j+1) \left(30 j^2+44 j+13\right)}{(3 j+2) (6 j+7)}. 
\end{equation*}
\end{example}

\begin{example}
 Setting $(a, b, c, d, e, f, n) = 
 ( -\frac{1}{3}, -\frac{1}{3}, \frac{1}{2}, \frac{2}{3}, \frac{1}{2}, \frac{1}{2}, \frac{2}{3} )$ in Theorem \ref{secondmaintheorem}, 
 we obtain 
 \begin{equation*} 
 \frac{\pi }{3} = 
 \sum_{j = 0}^{\infty} \left( -\frac{1}{4} \right)^{j} \left[ \begin{matrix} 
 \frac{5}{6} 
 \vspace{1mm} \\ 
 \frac{3}{2} 
 \end{matrix} \right]_{j} \frac{360 j^3+642 j^2+355 j+61}{(6 j+1) (12 j+5) (12 j+11)}. 
\end{equation*}
\end{example}

\begin{example}
 Setting $(a, b, c, d, e, f, n) = 
 ( -\frac{1}{2}, -\frac{1}{2}, 0, \frac{3}{4}, \frac{1}{4}, \frac{1}{4}, \frac{3}{4} )$ in Theorem \ref{secondmaintheorem}, 
 we obtain 
 \begin{equation*} 
 -\frac{\pi }{2 \sqrt{2}} = 
 \sum_{j = 0}^{\infty} \left( -\frac{1}{4} \right)^{j} \left[ \begin{matrix} 
 \frac{5}{4} 
 \vspace{1mm} \\ 
 \frac{3}{2 } 
 \end{matrix} \right]_{j} \frac{160 j^3+332 j^2+205 j+41}{(4 j-1) (8 j+5) (8 j+9)}. 
\end{equation*}
\end{example}

\begin{example}
 Setting $(a, b, c, d, e, f, n) = 
 ( -\frac{1}{3}, -\frac{1}{3}, \frac{1}{3}, \frac{2}{3}, 0, \frac{2}{3}, \frac{2}{3} )$ in Theorem \ref{secondmaintheorem}, we obtain 
 \begin{equation*} 
 \frac{81 \sqrt{3}}{2 \pi } = 
 \sum_{j = 0}^{\infty} \left( -\frac{1}{4} \right)^{j} \left[ \begin{matrix} 
 \frac{1}{3}, \frac{2}{3}, \frac{2}{3}, \frac{7}{6}, \frac{4}{3} 
 \vspace{1mm} \\ 
 1, 1, \frac{3}{2}, \frac{3}{2}, 2 
 \end{matrix} \right]_{j} (135 j^3+252 j^2+147 j+28). 
\end{equation*}
\end{example}

\begin{example}
 Setting $(a, b, c, d, e, f, n) = 
 ( -\frac{1}{3}, -\frac{1}{3}, \frac{1}{3}, \frac{2}{3}, \frac{2}{3}, 0, \frac{2}{3} )$ in Theorem \ref{secondmaintheorem}, we obtain 
 $$ \frac{81 \sqrt{3}}{2 \pi } = 
 \sum_{j = 0}^{\infty} \left( -\frac{1}{4} \right)^{j} \left[ \begin{matrix} 
 -\frac{1}{3}, \frac{1}{3}, \frac{2}{3}, \frac{5}{6}, \frac{4}{3} 
 \vspace{1mm} \\ 
 1, 1, \frac{3}{2}, \frac{3}{2}, 2 
 \end{matrix} \right]_{j} (135 j^3+261 j^2+138 j+20). $$ 
\end{example}

\begin{example}
 Setting $(a, b, c, d, e, f, n) = 
 ( -\frac{1}{2}, -\frac{1}{2}, \frac{1}{4}, \frac{3}{4}, \frac{1}{4}, \frac{3}{4}, \frac{3}{4} )$ in Theorem \ref{secondmaintheorem}, we obtain 
 \begin{equation*} 
 \frac{256 \sqrt{2}}{\pi } = 
 \sum_{j = 0}^{\infty} \left( -\frac{1}{4} \right)^{j} \left[ \begin{matrix} 
 \frac{1}{4}, \frac{5}{8}, \frac{3}{4}, \frac{9}{8}, \frac{5}{4} 
 \vspace{1mm} \\ 
 1, 1, \frac{3}{2}, \frac{3}{2}, 2 
 \end{matrix} \right]_{j} (640 j^3+1200 j^2+704 j+135). 
\end{equation*}
\end{example}

\begin{example}
 Setting $(a, b, c, d, e, f, n) = 
 ( -\frac{1}{2}, -\frac{1}{2}, \frac{1}{4}, \frac{3}{4}, \frac{3}{4}, \frac{1}{4}, \frac{3}{4} )$ in Theorem \ref{secondmaintheorem}, we obtain 
 \begin{equation*} 
 \frac{256 \sqrt{2}}{\pi } = 
 \sum_{j = 0}^{\infty} \left( -\frac{1}{4} \right)^{j} \left[ \begin{matrix} 
 -\frac{1}{4}, \frac{3}{8}, \frac{3}{4}, \frac{7}{8}, \frac{5}{4} 
 \vspace{1mm} \\ 
 1, 1, \frac{3}{2}, \frac{3}{2}, 2 
 \end{matrix} \right]_{j} (640 j^3+1232 j^2+672 j+105).
\end{equation*}
\end{example}

\begin{example}
 Setting $(a, b, c, d, e, f, n) = 
 ( -\frac{1}{6}, -\frac{1}{6}, \frac{5}{6}, \frac{1}{3}, \frac{2}{3}, \frac{2}{3}, 1 )$ in Theorem \ref{secondmaintheorem}, we obtain 
 the formula of Ramanujan type highlighted in \eqref{furtherclosed1}. 
\end{example}

\begin{example}
 Setting $(a, b, c, d, e, f, n) = 
 ( -\frac{1}{6}, -\frac{1}{6}, 1, 1, \frac{5}{6}, -\frac{1}{6}, \frac{1}{6} )$ in Theorem \ref{secondmaintheorem}, we obtain 
 the formula of Ramanujan type highlighted in \eqref{furthernewRT2}. 
\end{example}

\begin{example}
 Setting $(a, b, c, d, e, f, n) = 
 ( -\frac{1}{6}, -\frac{1}{6}, 0, \frac{5}{6}, \frac{1}{3}, 1, 1 )$ in Theorem \ref{secondmaintheorem}, we obtain 
 the formula of Ramanujan type highlighted in \eqref{2025042529202242P2M2A}. 
\end{example}

\begin{example}
 Setting $(a, b, c, d, e, f, n) = 
 ( -\frac{1}{6}, -\frac{1}{6}, \frac{1}{6}, 1, -\frac{1}{6}, \frac{5}{6}, \frac{5}{6} )$ in Theorem \ref{secondmaintheorem}, we obtain 
 the motivating $\pi$ formula highlighted in \eqref{208828588084828984898P8M1A}. 
\end{example}

\begin{example}
 Setting $(a, b, c, d, e, f, n) = 
 ( -\frac{1}{2}, -\frac{1}{2}, \frac{1}{2}, \frac{1}{4}, \frac{1}{4}, \frac{1}{4}, \frac{1}{4} )$ in Theorem \ref{secondmaintheorem}, we obtain 
 the motivating $\pi$ formula shown in \eqref{20250423motivating452four1}. 
\end{example}

 Theorem \ref{20250517937PM1A} below also relies on iterative index shifted, 
 and it can be applied, as below, to prove the formula of Ramanujan type highlighted in \eqref{furthernewRT4}.

\begin{theorem}\label{20250517937PM1A}
 The acceleration identity in \eqref{20250507454PM1A} of iteration pattern $(1, 1)$ holds 
 for the input function 
 $$ F(n, k, a) := \frac{ (d)_{k + a} (b)_{k+e} }{ (n+a)_{k+f} (n)_{k+c}}. $$
\end{theorem}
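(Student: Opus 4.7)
The plan is to proceed in direct analogy with the proofs of Theorems \ref{firstmaintheorem} and \ref{secondmaintheorem}. First I would verify that the input function $F(n,k,a)$ is hypergeometric in the pair $(n,k)$ with $a$ held fixed, and separately hypergeometric in the pair $(a,k)$ with $n$ held fixed. Although $a$ now appears in two shifted indices, namely in the upper Pochhammer $(d)_{k+a}$ and in the mixed lower argument $(n+a)_{k+f}$, standard manipulations of rising factorials show that each of the term-ratios $F(n,k+1,a)/F(n,k,a)$, $F(n+1,k,a)/F(n,k,a)$, and $F(n,k,a+1)/F(n,k,a)$ is rational in its respective variables, so Zeilberger's algorithm is applicable in both directions.

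Next I would apply Zeilberger's algorithm to $F(n,k,a)$ with $a$ treated as a fixed parameter, seeking a first-order inhomogeneous recurrence
\begin{equation*}
p_{1}(n,a) F(n+1,k,a) + p_{2}(n,a) F(n,k,a) = G(n,k+1,a) - G(n,k,a),
\end{equation*}
where $G(n,k,a) = R(n,k,a) F(n,k,a)$ for a rational certificate $R$. I would then form $\mathcal{F}(n,k) := F(m,k,n)$ as in \eqref{mathcalFdefine} and apply Zeilberger's algorithm a second time to obtain the companion recurrence in the $a$-direction, which after the relabeling prescribed in Section \ref{sectioniterative} produces the difference equation \eqref{20250509407PM1A} with coefficients $p_{3}(n,a), p_{4}(n,a)$ and secondary certificate $\mathcal{R}(n,k,a)$.

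With the two recurrences in hand, I would sum over $k$, verify the vanishing of $G(n,k+1,a)$ and $\mathcal{G}(n,k+1,a)$ as $k\to\infty$ on appropriate ranges of the parameters so that the telescoping collapses as in \eqref{telescopep1p2} and \eqref{2022502520236525PM2A}, and thereby obtain the recursions \eqref{iteration1c0} and \eqref{2120255505505536555855PM5A} for $\mathfrak{f}(n,a)$. Composing these in the prescribed order produces the iteration-pattern-$(1,1)$ recurrence \eqref{202505067425PM2A}, and iterating that recurrence while assuming the vanishing of the tail product $\bigl(\prod_{i=0}^{m} g_{6}(n+i,a+i)\bigr)\mathfrak{f}(n+m+1,a+m+1)$ as $m\to\infty$ yields the desired acceleration identity \eqref{20250507454PM1A}.

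The main obstacle I anticipate is confirming that Zeilberger's algorithm actually returns a first-order recurrence, rather than being forced to a higher order, for this particular combination of shifted and mixed arguments. Since $a$ now enters both as an index shift in $(d)_{k+a}$ and as part of the mixed argument of $(n+a)_{k+f}$, the certificate polynomials $R$, $\mathcal{R}$ and the coefficients $p_{1}, p_{2}, p_{3}, p_{4}$ will be somewhat more intricate than in Theorem \ref{secondmaintheorem}, and the tail-vanishing condition must be secured on a domain of parameters broad enough to include the substitutions used to prove \eqref{furthernewRT4}. Both checks are ultimately routine with a CAS implementation of the Gosper--Zeilberger machinery, but they constitute the substantive content of the argument.
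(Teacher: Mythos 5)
Your proposal follows exactly the route the paper takes: run Zeilberger's algorithm on $F(n,k,a)$ in the $n$-direction and then on $\mathcal{F}(n,k)$ in the $a$-direction to get two first-order inhomogeneous recurrences, telescope over $k$, compose the resulting $\mathfrak{f}$-recursions into the iteration-pattern-$(1,1)$ recurrence, and iterate; the paper's own proof does precisely this, merely recording the explicit rational certificates $R(n,k,a)$ and $\mathcal{R}(n,k,a) = (a+n)(c+k+n-1)$ and leaving the remaining verifications to the reader. The only difference is that you describe the pipeline without exhibiting the certificates, which is a matter of presentation rather than of method.
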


\begin{proof}
 By analogy with the proofs of Theorems \ref{firstmaintheorem} and 
 \ref{secondmaintheorem}, this can be shown to hold using the rational certificate 

 \ 

\noindent $R(n, k, a) = -n (a+n) \big(-a d+a f+a n-b c+b d-b f-b k-2 b n+b+c^2-c d-c e+c f+c k+3 c n-c + 
 d e-d f-d k-2 d n+d-e f-e k-2 e n+e+f^2+f k+3 f n-f+2 k n+3 n^2-2 n \big)$ 

 \ 

\noindent together with the secondary rational certificate 

 \ 

\noindent $\mathcal{R}(n, k, a) = (a+n) (c+k+n-1)$

 \ 

\noindent according to the notation in Section \ref{sectioniterative}, leaving the details to the reader. 
 \end{proof}

\begin{example}
 Setting $(a, b, c, d, e, f, n) = 
 ( -\frac{1}{4}, -\frac{1}{4}, -\frac{1}{4}, -\frac{1}{2}, -\frac{1}{2}, -\frac{1}{4}, \frac{1}{2} )$ in Theorem \ref{20250517937PM1A}, we obtain 
\begin{equation*} 
 -\frac{3 \pi }{8 \sqrt{2}} = 
 \sum_{j = 0}^{\infty} \left( -\frac{1}{4} \right)^{j} \left[ \begin{matrix} 
 \frac{3}{4} 
 \vspace{1mm} \\ 
 \frac{1}{2}
 \end{matrix} \right]_{j} \frac{160 j^3+148 j^2+21 j+9}{(4 j-3) (8 j+3) (8 j+7)}. 
\end{equation*}
\end{example}

\begin{example}
 Setting $(a, b, c, d, e, f, n) = 
 ( \frac{1}{2}, -\frac{1}{2}, \frac{1}{2}, -\frac{1}{3}, \frac{1}{3}, 0, \frac{1}{2} )$ in Theorem \ref{20250517937PM1A}, 
 we obtain 
 \begin{equation*} 
 \frac{1296}{\pi } = 
 \sum_{j = 0}^{\infty} \left( -\frac{1}{4} \right)^{j} \left[ \begin{matrix} 
 \frac{1}{6}, \frac{7}{12}, \frac{5}{6}, \frac{13}{12}, \frac{7}{6} 
 \vspace{1mm} \\ 
 1, 1, \frac{3}{2}, \frac{3}{2}, 2 
 \end{matrix} \right]_{j} (2160 j^3+4068 j^2+2388 j+455). 
\end{equation*}
\end{example}

\begin{example}
 Setting $(a, b, c, d, e, f, n) = 
 ( \frac{1}{2}, -\frac{1}{2}, \frac{1}{2}, \frac{1}{3}, -\frac{1}{3}, 0, \frac{1}{2} )$ in Theorem \ref{20250517937PM1A}, 
 we obtain 
 \begin{equation*} 
 \frac{1296}{5 \pi } = 
 \sum_{j = 0}^{\infty} \left( -\frac{1}{4} \right)^{j} \left[ \begin{matrix} 
 \frac{1}{6}, \frac{5}{6}, \frac{11}{12}, \frac{17}{12}, \frac{11}{6} 
 \vspace{1mm} \\ 
 1, 1, \frac{3}{2}, \frac{3}{2}, 2 
 \end{matrix} \right]_{j} (2160 j^3+3924 j^2+1956 j+187). 
\end{equation*}
\end{example}

\begin{example}
 Setting $(a, b, c, d, e, f, n) = 
 ( -\frac{1}{2}, \frac{7}{8}, \frac{7}{8}, \frac{7}{8}, \frac{1}{4}, \frac{3}{4}, \frac{3}{4} )$ 
 in Theorem \ref{20250517937PM1A}, 
 we obtain the formula of Ramanujan type highlighted in \eqref{furthernewRT4}. 
\end{example}

  By considering the use of iterated index shifting by  reversing the sign of the $a$-term involved in the Pochhammer indices above,  
  this has led us to experimentally discover  how the input function in \eqref{27027507517437332AM2A} yields, via our iterative method,  
  accelerated series with a positive convergence rate of $\frac{1}{4}$, 
 in contrast to the convergence rate obtained from 
 both Theorem \ref{firstmaintheorem} and Theorem \ref{secondmaintheorem}. 

\begin{theorem}\label{theorem2nnega}
 The acceleration identity in \eqref{20250507454PM1A} of iteration pattern $(1, 1)$ holds 
 for the input function 
\begin{equation}\label{27027507517437332AM2A}
 F(n, k, a) := \frac{ (b)_{k + f} (c)_{k +e} }{ (2n)_{k-a} (n)_{k+d}}. 
\end{equation}
\end{theorem}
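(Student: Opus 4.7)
My plan is to proceed in close parallel with the proofs of Theorems \ref{firstmaintheorem}, \ref{secondmaintheorem}, and \ref{20250517937PM1A}, since the input function in \eqref{27027507517437332AM2A} again has exactly two upper and two lower rising factorials, with all Pochhammer arguments and indices affine in $n$, $k$, and $a$. First I would regard $a$ as a fixed parameter and apply Zeilberger's algorithm to $F(n,k,a)$ viewed as a bivariate hypergeometric function of $n$ and $k$; this is expected to produce a first-order inhomogeneous recurrence
$$p_{1}(n,a)\,F(n+1,k,a) + p_{2}(n,a)\,F(n,k,a) = G(n,k+1,a) - G(n,k,a)$$
with $G(n,k,a) = R(n,k,a)\,F(n,k,a)$ for a rational certificate $R$. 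I would then set $\mathcal{F}(n,k) := F(m,k,n)$ as in \eqref{mathcalFdefine} and apply Zeilberger's algorithm a second time to obtain a first-order recurrence
$$q_{3}(n,m)\,\mathcal{F}(n+1,k) + q_{4}(n,m)\,\mathcal{F}(n,k) = \mathcal{H}(n,k+1) - \mathcal{H}(n,k),$$
with a secondary rational certificate $\mathcal{S}$ such that $\mathcal{H} = \mathcal{S}\mathcal{F}$, before relabelling into $p_{3}$, $p_{4}$, and $\mathcal{G}$ according to the notation of Section \ref{sectioniterative}.

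Next, I would verify the two telescoping and vanishing assumptions. Using the asymptotic $(x)_{k} \sim \Gamma(k+x)/\Gamma(x)$ for the numerator and denominator factors of $F(n,k,a)$, one can check that both $G(n,k+1,a)$ and $\mathcal{H}(n,k+1)$ decay as $k\to\infty$ whenever the rational parameters lie in a suitable convergence domain. Summing $k$ from $0$ to $\infty$ therefore yields the recursions \eqref{iteration1c0} and \eqref{2120255505505536555855PM5A}, and composing them exactly as in Section \ref{sectioniterative} produces the iteration-pattern $(1,1)$ recurrence \eqref{202505067425PM2A}. The final vanishing
$\lim_{m\to\infty}\prod_{i=0}^{m}g_{6}(n+i,a+i)\,\mathfrak{f}(n+m+1,a+m+1) = 0$
is precisely the source of the positive convergence rate $\frac{1}{4}$ advertised by the theorem: the denominator Pochhammer $(2n)_{k-a}$ contributes, through a Legendre-duplication-type mechanism, a shift that forces $g_{6}$ to decay like $\frac{1}{4}$ times a bounded rational factor, in contrast to the $-\frac{1}{4}$ rate obtained from Theorems \ref{firstmaintheorem} and \ref{secondmaintheorem}. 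Iterating \eqref{202505067425PM2A} then yields \eqref{20250507454PM1A} mechanically.

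The main obstacle I expect is the explicit production and verification of the certificates $R(n,k,a)$ and $\mathcal{S}(n,k,a)$: the mixed dependence of $F(n,k,a)$ on $a$ through the denominator index $k-a$ of $(2n)_{k-a}$, together with the factor $2n$ in the other denominator argument, makes the certificate more delicate than in the previous theorems, and its explicit form will likely be too large to display in the style of the earlier proofs. I would therefore carry out this step using a computer-algebra implementation of Zeilberger's algorithm, as the authors do elsewhere in the paper, and then check that the polynomial factors $p_{2}(n,a)$ and $p_{4}(n,a)$ are not identically zero, so that the rational functions $g_{1},\ldots,g_{4}$ (and hence $g_{5}$, $g_{6}$) are legitimately defined, and that the vanishing hypotheses on the certificates hold in the relevant parameter range. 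Once these technical verifications are in place, the acceleration identity \eqref{20250507454PM1A} follows exactly as in Theorems \ref{firstmaintheorem}--\ref{20250517937PM1A}.
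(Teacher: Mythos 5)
Your proposal follows exactly the route the paper itself takes: the paper's proof of Theorem \ref{theorem2nnega} is a one-line appeal to the proofs of Theorems \ref{firstmaintheorem} and \ref{secondmaintheorem} with the details left to the reader, and your outline (two applications of Zeilberger's algorithm, telescoping plus vanishing conditions, composition into the iteration pattern $(1,1)$, and CAS verification of the certificates) is precisely the intended filling-in of those details. Your proposal is correct and, if anything, more explicit than the published proof.
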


\begin{proof}
   This can be shown to hold by   analogy with the proofs of Theorems \ref{firstmaintheorem} and  
   \ref{secondmaintheorem},  using the   iteration pattern $(1, 1)$, and leaving the details to the reader. 
\end{proof}

\begin{example}
 Setting $(a, b, c, d, e, f, n) = 
 ( -\frac{1}{2}, -\frac{1}{2}, -\frac{1}{2}, -\frac{1}{6}, -\frac{1}{6}, \frac{2}{3}, \frac{1}{6} )$ in Theorem \ref{theorem2nnega}, 
 we obtain 
 \begin{equation*} 
 \frac{8 \sqrt[3]{2}}{27} = 
 \sum_{j = 0}^{\infty} \left( \frac{1}{4} \right)^{j} \left[ \begin{matrix} 
 \frac{2}{3}, \frac{3}{2} 
 \vspace{1mm} \\ 
 1, \frac{7}{6}
 \end{matrix} \right]_{j}   j.  
 \end{equation*}
\end{example}

\begin{example}
 Setting $(a, b, c, d, e, f, n) = 
 ( -\frac{1}{2}, -\frac{1}{2}, -\frac{1}{2}, \frac{1}{3}, -\frac{1}{6}, \frac{2}{3}, \frac{2}{3} )$ in Theorem \ref{theorem2nnega}, 
 we obtain 
 \begin{equation*} 
 \frac{112 \sqrt[3]{2}}{81} = 
 \sum_{j = 0}^{\infty} \left( \frac{1}{4} \right)^{j} \left[ \begin{matrix} 
 \frac{5}{3}, \frac{5}{2} 
 \vspace{1mm} \\ 
 1, \frac{13}{6} 
 \end{matrix} \right]_{j}. 
 \end{equation*}
\end{example}

\begin{example}
 Setting $(a, b, c, d, e, f, n) = 
 ( -\frac{1}{2}, -\frac{1}{2}, -\frac{1}{2}, -\frac{1}{6}, 0, \frac{2}{3}, \frac{1}{6} )$ in Theorem \ref{theorem2nnega}, we obtain 
 \begin{equation*} 
 \frac{7 \sqrt{3}}{8} = 
 \sum_{j = 0}^{\infty} \left( \frac{1}{4} \right)^{j} \left[ \begin{matrix} 
 \frac{1}{2}, \frac{2}{3}, \frac{4}{3} 
 \vspace{1mm} \\ 
 1, \frac{13}{12}, \frac{19}{12} 
 \end{matrix} \right]_{j} 
 j (9 j+5). 
 \end{equation*}
\end{example}

\begin{example}
 Setting $(a, b, c, d, e, f, n) = 
 ( -\frac{1}{3}, -\frac{1}{3}, -\frac{1}{3}, 0, -\frac{1}{3}, -\frac{1}{3}, \frac{1}{3} )$ in Theorem \ref{theorem2nnega}, we obtain 
 \begin{equation*} 
 \frac{160 \pi }{9 \sqrt{3}} = 
 \sum_{j = 0}^{\infty} \left( \frac{1}{4} \right)^{j} \left[ \begin{matrix} 
 1, \frac{5}{3}, \frac{5}{3} 
 \vspace{1mm} \\ 
 \frac{1}{3}, \frac{11}{6}, \frac{7}{3} 
 \end{matrix} \right]_{j} 
 (9 j^2+14 j+3). 
 \end{equation*}
\end{example}

\begin{example}
 Setting $(a, b, c, d, e, f, n) = 
 ( -\frac{1}{3}, -\frac{1}{3}, -\frac{1}{3}, \frac{1}{3}, 0, 0, \frac{1}{3} )$ in Theorem \ref{theorem2nnega}, we obtain 
 \begin{equation*} 
 \frac{56 \pi }{9 \sqrt{3}} = 
 \sum_{j = 0}^{\infty} \left( \frac{1}{4} \right)^{j} \left[ \begin{matrix} 
 1, \frac{4}{3}, \frac{4}{3} 
 \vspace{1mm} \\ 
 \frac{2}{3}, \frac{5}{3}, \frac{13}{6}
 \end{matrix} \right]_{j} 
 (9 j^2+13 j+3). 
 \end{equation*}
\end{example}

\begin{example}
 Setting $(a, b, c, d, e, f, n) = 
 ( -\frac{1}{3}, -\frac{1}{3}, -\frac{1}{3}, \frac{1}{3}, 1, 1, \frac{2}{3} )$ in Theorem \ref{theorem2nnega}, we obtain 
 \begin{equation*} 
 \frac{16 \pi }{9 \sqrt{3}} = 
 \sum_{j = 0}^{\infty} \left( \frac{1}{4} \right)^{j} \left[ \begin{matrix} 
 \frac{1}{3}, \frac{1}{3}, 1 
 \vspace{1mm} \\ 
 \frac{7}{6}, \frac{5}{3}, \frac{5}{3} 
 \end{matrix} \right]_{j} 
 (9 j^2+10 j+3). 
 \end{equation*}
\end{example}

\begin{example}
 Setting $(a, b, c, d, e, f, n) = 
 ( -\frac{1}{3}, -\frac{1}{3}, -\frac{1}{3}, \frac{1}{3}, 0, 1, \frac{2}{3} )$ in Theorem \ref{theorem2nnega}, we obtain 
 \begin{equation*} 
 \frac{112 \pi }{27 \sqrt{3}} = 
 \sum_{j = 0}^{\infty} \left( \frac{1}{4} \right)^{j} \left[ \begin{matrix} 
 \frac{1}{3}, \frac{4}{3}, 2 
 \vspace{1mm} \\ 
 \frac{5}{3}, \frac{5}{3}, \frac{13}{6} 
 \end{matrix} \right]_{j} 
 (9 j^2+16 j+6). 
 \end{equation*}
\end{example}

\begin{example}
 Setting $(a, b, c, d, e, f, n) = 
 ( -\frac{1}{3}, -\frac{1}{3}, -\frac{1}{3}, 1, -\frac{1}{3}, \frac{2}{3}, \frac{1}{3} )$ in Theorem \ref{theorem2nnega}, we obtain 
 \begin{equation*} 
 \frac{160 \pi }{27 \sqrt{3}} = 
 \sum_{j = 0}^{\infty} \left( \frac{1}{4} \right)^{j} \left[ \begin{matrix} 
 \frac{2}{3}, \frac{5}{3}, 2 
 \vspace{1mm} \\ 
 \frac{4}{3}, \frac{11}{6}, \frac{7}{3} 
 \end{matrix} \right]_{j} 
 (9 j^2+17 j+6). 
 \end{equation*}
\end{example}

\begin{example}
 Setting $(a, b, c, d, e, f, n) = 
 ( -\frac{1}{3}, -\frac{1}{3}, -\frac{1}{3}, 1, \frac{2}{3}, \frac{2}{3}, \frac{1}{3} )$ in Theorem \ref{theorem2nnega}, we obtain 
 \begin{equation*} 
 \frac{20 \pi }{9 \sqrt{3}} = 
 \sum_{j = 0}^{\infty} \left( \frac{1}{4} \right)^{j} \left[ \begin{matrix} 
 \frac{2}{3}, \frac{2}{3}, 1 
 \vspace{1mm} \\ 
 \frac{4}{3}, \frac{4}{3}, \frac{11}{6} 
 \end{matrix} \right]_{j} 
 (9 j^2+11 j+3). 
 \end{equation*}
\end{example}

\begin{example}
 Setting $(a, b, c, d, e, f, n) = 
 ( -\frac{1}{2}, -\frac{1}{2}, -\frac{1}{6}, \frac{1}{3}, 1, \frac{1}{3}, \frac{2}{3} )$ in Theorem \ref{theorem2nnega}, we obtain 
 \begin{equation*} 
 \frac{455 \pi }{108} = 
 \sum_{j = 0}^{\infty} \left( \frac{1}{4} \right)^{j} \left[ \begin{matrix} 
 \frac{1}{6}, \frac{7}{6}, 2 
 \vspace{1mm} \\ 
 \frac{19}{12}, \frac{11}{6}, \frac{25}{12} 
 \end{matrix} \right]_{j} 
 (18 j^2+31 j+12). 
 \end{equation*}
\end{example}

\begin{example}
 Setting $(a, b, c, d, e, f, n) = 
 ( -\frac{1}{3}, -\frac{1}{3}, -\frac{1}{3}, \frac{2}{3}, 0, \frac{2}{3}, \frac{1}{3} )$ in Theorem \ref{theorem2nnega}, we obtain 
 \begin{equation*} 
 \frac{27 \sqrt{3}}{\pi } = 
 \sum_{j = 0}^{\infty} \left( \frac{1}{4} \right)^{j} \left[ \begin{matrix} 
 \frac{2}{3}, \frac{2}{3}, \frac{4}{3}, \frac{4}{3} 
 \vspace{1mm} \\ 
 1, 1, \frac{3}{2}, 2 
 \end{matrix} \right]_{j} 
 (27 j^2+36 j+8). 
 \end{equation*}
\end{example}

\begin{example}
 Setting $(a, b, c, d, e, f, n) = 
 ( -\frac{1}{3}, -\frac{1}{3}, -\frac{1}{3}, \frac{2}{3}, 1, -\frac{1}{3}, \frac{1}{3} )$ in Theorem \ref{theorem2nnega}, we obtain 
 \begin{equation*} 
 \frac{27 \sqrt{3}}{2 \pi } = 
 \sum_{j = 0}^{\infty} \left( \frac{1}{4} \right)^{j} \left[ \begin{matrix} 
 \frac{1}{3}, \frac{1}{3}, \frac{5}{3}, \frac{5}{3} 
 \vspace{1mm} \\ 
 1, 1, \frac{3}{2}, 2 
 \end{matrix} \right]_{j} 
 (27 j^2+36 j+5). 
 \end{equation*}
\end{example}

\begin{example}
 Setting $(a, b, c, d, e, f, n) = 
 ( -\frac{1}{2}, -\frac{1}{2}, -\frac{1}{2}, \frac{1}{2}, \frac{1}{4}, \frac{1}{4}, \frac{1}{4} )$ in Theorem \ref{theorem2nnega}, we obtain 
 \begin{equation*} 
 \frac{45 \pi }{8 \sqrt{2}} = 
 \sum_{j = 0}^{\infty} \left( \frac{1}{4} \right)^{j} \left[ \begin{matrix} 
 1, \frac{5}{4}, \frac{5}{4} 
 \vspace{1mm} \\ 
 \frac{3}{4}, \frac{13}{8}, \frac{17}{8} 
 \end{matrix} \right]_{j} 
 (12 j^2+17 j+4). 
 \end{equation*}
\end{example}

\begin{example}
 Setting $(a, b, c, d, e, f, n) = 
 ( -\frac{1}{2}, -\frac{1}{2}, -\frac{1}{2}, \frac{1}{4}, \frac{3}{4}, \frac{1}{4}, \frac{3}{4} )$ in Theorem \ref{theorem2nnega}, we obtain 
 \begin{equation*} 
 \frac{1024 \sqrt{2}}{5 \pi } = 
 \sum_{j = 0}^{\infty} \left( \frac{1}{4} \right)^{j} \left[ \begin{matrix} 
 \frac{3}{4}, \frac{5}{4}, \frac{7}{4}, \frac{9}{4} 
 \vspace{1mm} \\ 
 1, 2, 2, \frac{5}{2} 
 \end{matrix} \right]_{j} 
 (48 j^2+112 j+63). 
 \end{equation*}
\end{example}

\begin{example}
 Setting $(a, b, c, d, e, f, n) = 
 ( -\frac{1}{2}, -\frac{1}{2}, \frac{1}{4}, \frac{1}{4}, \frac{1}{2}, \frac{3}{4}, \frac{3}{4} )$ in Theorem \ref{theorem2nnega}, we obtain 
 \begin{equation*} 
 \frac{256 \sqrt{2}}{3 \pi } = 
 \sum_{j = 0}^{\infty} \left( \frac{1}{4} \right)^{j} \left[ \begin{matrix} 
 \frac{1}{4}, \frac{3}{4}, \frac{5}{4}, \frac{7}{4} 
 \vspace{1mm} \\ 
 1, \frac{3}{2}, 2, 2 
 \end{matrix} \right]_{j} 
 (48j^2 + 80 j + 35). 
 \end{equation*}
\end{example}

\begin{example}
 Setting $(a, b, c, d, e, f, n) = 
 ( -\frac{1}{2}, -\frac{1}{2}, \frac{1}{4}, \frac{1}{2}, \frac{3}{4}, 1, \frac{3}{4} )$ in Theorem \ref{theorem2nnega}, we obtain 
 \begin{equation*} 
 \frac{21 \pi }{2} = 
 \sum_{j = 0}^{\infty} \left( \frac{1}{4} \right)^{j} \left[ \begin{matrix} 
 \frac{3}{4}, \frac{3}{2} 
 \vspace{1mm} \\ 
 \frac{11}{8}, \frac{15}{8} 
 \end{matrix} \right]_{j} 
 \frac{48 j^2+76 j+31}{(j+1) (4 j+1)}. 
 \end{equation*}
\end{example}

\begin{example}
 Setting $(a, b, c, d, e, f, n) = 
 ( -\frac{1}{2}, -\frac{1}{2}, \frac{1}{4}, \frac{1}{2}, -\frac{1}{2}, \frac{1}{4}, \frac{1}{4} )$ in Theorem \ref{theorem2nnega}, we obtain 
 \begin{equation*} 
 \frac{45 \pi }{8 \sqrt{2}} = 
 \sum_{j = 0}^{\infty} \left( \frac{1}{4} \right)^{j} \left[ \begin{matrix} 
 1, \frac{5}{4}, \frac{5}{4} 
 \vspace{1mm} \\ 
 \frac{3}{4}, \frac{13}{8}, \frac{17}{8} 
 \end{matrix} \right]_{j} 
 (12 j^2+17 j+4). 
 \end{equation*}
\end{example}

\begin{example}
 Setting $(a, b, c, d, e, f, n) = 
 ( -\frac{1}{2}, -\frac{1}{2}, -\frac{1}{2}, \frac{5}{6}, \frac{1}{3}, \frac{1}{3}, \frac{1}{6} )$ in Theorem \ref{theorem2nnega}, we obtain 
 \begin{equation*} 
 \frac{91 \pi }{18} = 
 \sum_{j = 0}^{\infty} \left( \frac{1}{4} \right)^{j} \left[ \begin{matrix} 
 1, \frac{7}{6}, \frac{7}{6} 
 \vspace{1mm} \\ 
 \frac{5}{6}, \frac{19}{12}, \frac{25}{12} 
 \end{matrix} \right]_{j} 
 (18 j^2+25 j+6). 
 \end{equation*}
\end{example}

\begin{example}
 Setting $(a, b, c, d, e, f, n) = 
 ( -\frac{1}{2}, -\frac{1}{2}, \frac{1}{4}, \frac{3}{4}, 0, \frac{1}{4}, \frac{1}{4} )$ in Theorem \ref{theorem2nnega}, we obtain 
 \begin{equation*} 
 \frac{64 \sqrt{2}}{\pi } = 
 \sum_{j = 0}^{\infty} \left( \frac{1}{4} \right)^{j} \left[ \begin{matrix} 
 \frac{3}{4}, \frac{3}{4}, \frac{5}{4}, \frac{5}{4} 
 \vspace{1mm} \\ 
 1, 1, \frac{3}{2}, 2 
 \end{matrix} \right]_{j} 
 (48 j^2+64 j+15). 
 \end{equation*}
\end{example}

\begin{example}
 Setting $(a, b, c, d, e, f, n) = 
 ( -\frac{1}{2}, -\frac{1}{2}, -\frac{1}{2}, 1, \frac{1}{3}, \frac{2}{3}, \frac{5}{6} )$ in Theorem \ref{theorem2nnega}, we obtain 
 the formula of Ramanujan type highlighted in \eqref{pquarterRT1}.
\end{example}

\begin{example}
 Setting $(a, b, c, d, e, f, n) = 
 ( -\frac{1}{2}, -\frac{1}{2}, \frac{1}{4}, 1, \frac{1}{2}, \frac{1}{4}, \frac{1}{4} )$ in Theorem \ref{theorem2nnega}, we obtain 
 the formula of Ramanujan type highlighted in \eqref{pquarterRT2}. 
\end{example}

\begin{example}
 Setting $(a, b, c, d, e, f, n) = 
 ( -\frac{1}{2}, -\frac{1}{2}, -\frac{1}{2}, \frac{1}{6}, 0, \frac{1}{3}, \frac{5}{6} )$ in Theorem \ref{theorem2nnega}, we obtain 
 the formula of Ramanujan type highlighted in \eqref{202505121101PM1A}. 
\end{example}

\begin{example}
 Setting $(a, b, c, d, e, f, n) = 
 ( -\frac{1}{2}, -\frac{1}{2}, -\frac{1}{2}, \frac{1}{6}, \frac{1}{3}, 1, \frac{5}{6} )$ in Theorem \ref{theorem2nnega}, we obtain 
 the formula of Ramanujan type highlighted in \eqref{202505121118PM1A}. 
\end{example}
 
\begin{example}
 Setting $(a, b, c, d, e, f, n) = 
 ( -\frac{1}{2}, -\frac{1}{2}, -\frac{1}{2}, \frac{1}{6}, 0, \frac{1}{3}, \frac{5}{6} )$ in Theorem \ref{theorem2nnega}, we obtain 
 the formula of Ramanujan type highlighted in \eqref{nRTsqrt39p16}.
\end{example}

\begin{example}
 Setting $(a, b, c, d, e, f, n) = 
 ( -\frac{1}{2}, -\frac{1}{2}, -\frac{1}{2}, \frac{1}{3}, 0, \frac{2}{3}, \frac{2}{3} )$ in Theorem \ref{theorem2nnega}, we obtain 
 the formula of Ramanujan type highlighted in \eqref{202505121131PM1A}.
\end{example}
 
 \begin{example}
 Setting $(a, b, c, d, e, f, n) = 
 ( -\frac{1}{3}, -\frac{1}{3}, -\frac{1}{3}, 1, \frac{1}{2}, 1, \frac{1}{3} )$ in Theorem \ref{theorem2nnega}, we obtain 
 the formula of Ramanujan type highlighted in \eqref{20250513745PM1AAM1A}. 
\end{example}

\begin{example}
 Setting $(a, b, c, d, e, f, n) = 
 ( -\frac{1}{3}, -\frac{1}{3}, \frac{1}{3}, \frac{1}{3}, 0, \frac{1}{2}, \frac{2}{3} )$ in Theorem \ref{theorem2nnega}, we obtain 
 the formula of Ramanujan type highlighted in \eqref{20250513856}.
\end{example}

\begin{example}
 Setting $(a, b, c, d, e, f, n) = 
 ( -\frac{1}{12}, -\frac{1}{12}, -\frac{1}{12}, \frac{1}{12}, \frac{1}{6}, \frac{1}{2}, \frac{11}{12} )$ in Theorem \ref{theorem2nnega}, we obtain 
 the formula of Ramanujan type highlighted in \eqref{20250513323AM1A}.
\end{example}

\begin{example}
 Setting $(a, b, c, d, e, f, n) = 
 ( -\frac{1}{2}, -\frac{1}{2}, -\frac{1}{2}, 1, 1, 1, \frac{1}{2} )$ in Theorem \ref{theorem2nnega}, we 
 recover the formula of Ramanujan type due to Fabry and Guillera
 shown in \eqref{FabryGuillera}. 
\end{example}

\begin{example}
 Setting $(a, b, c, d, e, f, n) = 
 ( -\frac{1}{2}, -\frac{1}{2}, -\frac{1}{2}, \frac{1}{2}, 0, 1, 1 )$ in Theorem \ref{theorem2nnega}, we 
 the formula of Ramanujan type highlighted in \eqref{GuilleraGosper}. 
\end{example}

\begin{example}
 Setting $(a, b, c, d, e, f, n) = 
 ( -\frac{1}{2}, -\frac{1}{2}, \frac{1}{4}, \frac{1}{4}, \frac{1}{4}, 0, \frac{3}{4} )$ in Theorem \ref{theorem2nnega}, we obtain 
 the formula of Ramanujan type highlighted in \eqref{7202757075414347465837PM2A}. 
\end{example}

\begin{example}
 Setting $(a, b, c, d, e, f, n) = 
 ( -\frac{1}{2}, -\frac{1}{2}, -\frac{1}{2}, \frac{3}{4}, 0, 0, \frac{1}{4} )$ in Theorem \ref{theorem2nnega}, we obtain 
 the formula of Ramanujan type highlighted in \eqref{quasinewRT2p1}. 
\end{example}

\begin{example}
 Setting $(a, b, c, d, e, f, n) = 
 ( -\frac{1}{2}, -\frac{1}{2}, -\frac{1}{2}, \frac{5}{6}, \frac{1}{3}, \frac{1}{3}, \frac{1}{6} )$ in Theorem \ref{theorem2nnega}, we obtain 
 the motivating $\pi$ formula highlighted in \eqref{20727570757173310AM1A}. 
\end{example}

 \subsection{Unary iterations}
 The repeated application of the recursion in \eqref{iteration1c0} 
 provides a ``unary'' version of our extended and iterative method. 
 According to the notation in \eqref{iteration1c0} 
 for the bivariate function $\mathfrak{f}(n, a)$, the recurrence in 
 \eqref{iteration1c0} is of iteration pattern $(1, 0)$. 
 By writing $\mathfrak{f}(n, a) = \mathfrak{f}(n)$ in this case, 
 we obtain a recurrence of iteration pattern $(1)$. 
 For a bivariate hypergeometric function $F(n, k)$ that satisfies the desired conditions 
 from our past work concerning Wilf's method
 \cite{Campbellunpublished,CampbellLevrie2024free} 
 algorithm and that does not involve a parameter $\alpha$, by writing $F(n, k, \alpha) := F(n, k)$ 
 for a free parameter $\alpha$, the degenerate recursion $F(n, k, \alpha) = F(n, k, \alpha + 1)$ 
 is such that a modified version of the iterative 
 approach in Section \ref{sectioniterative} reduces to the unary case, and hence how our iterative method extends 
 the past research works on the unary case \cite{Campbellunpublished,CampbellLevrie2024free,Wilf1999}. 
 So, it would be consistent, in regard to the application of our iterative method, to further consider the unary case. 
 Moreover, since formulas of Ramanujan type, as defined in 
 Section \ref{sectiontype}, are a main object of study in this paper, 
 this motivates our derivation of new formulas of this type 
 through the unary case. 

 By inputting 
\begin{equation}\label{inputfirstunary}
 F(n, k) := \frac{ (n-a)_{k+f} (b)_{k+e} }{ (n+a)_{k+d} (2n)_{k+c} } 
\end{equation}
 using Zeilberger's algorithm, we obtain a first-order recurrence of the desired form, 
 with a rational certificate 

 \ 

\noindent $R(n, k) = \frac{1}{c+k+2 n} 2 n (2 n+1) (a+n) \big(-a^2 b+3 a^2 c-a^2 e+2 a^2 k+6 a^2 n+a b^2-4 a b c-a b d+2 a b e + 
 a b f-2 a b k-8 a b n+3 a c^2+4 a c d-4 a c e-2 a c f+4 a c k+14 a c n-2 a c-a d e+3 a d k+8 a d n+a e^2+a e f-2 a e k-8 a e n - 
 a f k-4 a f n+2 a k^2+10 a k n-2 a k+16 a n^2-4 a n+b^2 c-b^2 f+b^2 n-2 b c^2-2 b c d+2 b c e+2 b c f-2 b c k-8 b c n+b c+b d f - 
 b d k-3 b d n-2 b e f+2 b e n+b f k+5 b f n-b k^2-4 b k n+b k-7 b n^2+2 b n+c^3+2 c^2 d-2 c^2 e-c^2 f+2 c^2 k+7 c^2 n-c^2 +
 c d^2-2 c d e-2 c d f+2 c d k+8 c d n-c d+c e^2+2 c e f-2 c e k-8 c e n+c e-2 c f k-6 c f n+c f+c k^2+8 c k n-c k+15 c n^2-4 c n + 
 d^2 k+2 d^2 n+d e f-d e k-3 d e n-d f k-4 d f n+d k^2+5 d k n-d k+8 d n^2-2 d n-e^2 f+e^2 n+e f k+5 e f n-e k^2-4 e k n + 
 e k-7 e n^2+2 e n-f k^2-5 f k n+f k-8 f n^2+2 f n+2 k^2 n+8 k n^2-2 k n+10 n^3-4 n^2 \big) $ 

 \ 

\noindent and with polynomial coefficients $$ p_{1}(n) = (-1 + b - c + e - 2 n) (b - c + e - 2 n) (a - n) (a - b + d - e + 
 n) (a + c - f + n)$$ and $$p_{2}(n) = -2 n (2 n+1) (a+n) (2 a-b+c+d-e-f+2 n-1) (2 a-b+c+d-e-f+2 n)$$ 
 yielding a first-order recurrence 
 $p_{1}(n) F(n+1, k) + p_{2}(n) F(n, k) = G(n, k+1) - G(n, k)$ of the desired form. 
 Through the repeated application of an equivalent 
 ``unary'' version of the recurrence in \eqref{iteration1c0}, 
 based on the input function in \eqref{inputfirstunary}, 
 this can be used to prove the following results, leaving the details concerning 
 the unary case under consideration to the reader. 

\begin{example}
 Setting $(a, b, c, d, e, f, n) = 
 ( -\frac{1}{2}, -\frac{1}{2}, 1, \frac{3}{4}, \frac{1}{4}, 0, \frac{3}{4} )$ after applying the unary acceleration
 method to \eqref{inputfirstunary}, we obtain 
 the formula of Ramanujan type highlighted in \eqref{newRT5}. 
\end{example}

\begin{example}
 Setting $(a, b, c, d, e, f, n) = 
 ( -\frac{1}{2}, -\frac{1}{2}, 1, \frac{1}{2}, 0, -\frac{1}{3}, \frac{1}{3} )$ after applying the unary acceleration
 method to \eqref{inputfirstunary}, we obtain 
 the formula of Ramanujan type shown in \eqref{newRT9}. 
\end{example}

\begin{example}
 Setting $(a, b, c, d, e, f, n) = 
 ( -\frac{1}{2}, -\frac{1}{2}, 0, 1, 1, -\frac{1}{2}, 1 )$ after applying the unary acceleration method to \eqref{inputfirstunary}, 
 we obtain 
 the formula of Ramanujan type highlighted in \eqref{newRT10}. 
\end{example}

 Using our unary acceleration method with 
\begin{equation}\label{secondunary}
 F(n, k) := \frac{ (a-n)_{k + f} (b)_{k+e} }{ (n+a)_{k+d} (n)_{k+c} } 
\end{equation}
 as the input for Zeilberger's algorithm, 
 this has led us to discover further formulas of Ramanujan type, 
 for the specified parameter combinations
  below,  
  again leaving the details to the reader. 

\begin{example}
 Setting $(a, b, c, d, e, f, n) = 
 ( -\frac{1}{2}, -\frac{1}{2}, \frac{3}{2}, \frac{3}{2}, \frac{1}{4}, \frac{3}{2}, \frac{3}{4} )$ 
 after applying the unary acceleration method to \eqref{secondunary}, 
 we obtain 
 \begin{equation*} 
 15 \pi = 
 \sum_{j = 0}^{\infty} \left( \frac{16}{27} \right)^{j} \left[ \begin{matrix} 
 1, \frac{3}{2}, \frac{5}{2} 
 \vspace{1mm} \\ 
 \frac{7}{4}, \frac{7}{3}, \frac{8}{3} 
 \end{matrix} \right]_{j} 
 (11 j^2+29 j+16). 
 \end{equation*}
\end{example}

\begin{example}
 Setting $(a, b, c, d, e, f, n) = 
 ( \frac{1}{2}, \frac{1}{2}, \frac{1}{2}, \frac{1}{2}, \frac{1}{2}, 1, 1 )$ after applying
 the unary acceleration method to \eqref{secondunary}, we obtain 
 \begin{equation*} 
 12 \log (2) = 
 \sum_{j = 0}^{\infty} \left( \frac{16}{27} \right)^{j} \left[ \begin{matrix} 
 \frac{3}{4}, \frac{5}{4} 
 \vspace{1mm} \\ 
 \frac{4}{3}, \frac{5}{3} 
 \end{matrix} \right]_{j} 
 \frac{11 j^2+16 j+6}{(j+1) (2 j+1)}. 
 \end{equation*}
\end{example}

\begin{example}
 Setting $(a, b, c, d, e, f, n) = 
 ( \frac{1}{2}, \frac{1}{2}, 0, \frac{1}{2}, 0, 0, 1 )$ after applying the unary acceleration method to \eqref{secondunary}, 
 we obtain 
 \begin{equation*} 
 \frac{512}{\pi } = 
 \sum_{j = 0}^{\infty} \left( \frac{16}{27} \right)^{j} \left[ \begin{matrix} 
 \frac{1}{2}, \frac{3}{4}, \frac{5}{4}, \frac{5}{4}, \frac{7}{4} 
 \vspace{1mm} \\ 
 1, \frac{5}{3}, 2, 2, \frac{7}{3} 
 \end{matrix} \right]_{j} 
 (88 j^3+284 j^2+300 j+105). 
 \end{equation*}
%US1
%US2
%US3
%US4
%US5
%US6
%US7
%US8
\end{example}
 
\begin{example}
 Setting $(a, b, c, d, e, f, n) = 
 ( -\frac{1}{2}, -\frac{1}{2}, \frac{3}{2}, \frac{3}{2}, \frac{3}{4}, \frac{3}{2}, \frac{1}{4} )$ after applying the unary acceleration 
 method to \eqref{secondunary}, we obtain 
 the formula of Ramanujan type highlighted in \eqref{newRamanujantype1}. 
\end{example}

\begin{example}
 Setting $(a, b, c, d, e, f, n) = 
 ( \frac{1}{2}, \frac{1}{2}, \frac{1}{2}, -\frac{1}{2}, 0, 1, 1 )$ after applying the unary acceleration method 
 to \eqref{secondunary}, we obtain 
 the formula of Ramanujan type shown in \eqref{recoveranother}. 
\end{example}

\begin{example}
 Setting $(a, b, c, d, e, f, n) = 
 ( \frac{1}{2}, \frac{1}{2}, \frac{1}{2}, 0, 0, 1, 1 )$ after applying the unary acceleration method to \eqref{secondunary}, 
 we obtain the formula of Ramanujan type highlighted in \eqref{202540429935PM1A}. 
\end{example}

\begin{example}
 Setting $(a, b, c, d, e, f, n) = 
 ( -\frac{1}{2}, -\frac{1}{2}, \frac{1}{2}, \frac{1}{2}, 1, 0, 1 )$ after applying the unary acceleration method
 to \eqref{secondunary}, we obtain 
 the motivating $\pi$ formula highlighted in \eqref{2025043072226PM2A}. 
\end{example}

\subsection{A ternary iteration}
 By modifying our derivation of the recurrence in \eqref{202505067425PM2A} 
 of iteration pattern $(1, 1)$, we make use of an input function
 $F(n, k, a, b)$ in such a way so that, by writing 
 $\mathfrak{f}(n, a, b) := \sum_{k=0}^{\infty} F(n, k, a, b)$, we would obtain a recurrence of the form
\begin{equation}\label{202505191233PM1A}
 \mathfrak{f}(n, a, b) = g_{7}(n, a, b) 
 + g_{8}(n, a, b) \, \mathfrak{f}(n+1, a+1, b+1) 
\end{equation}
 of iteration pattern $(1, 1, 1)$, providing a ternary version of 
 of accelerations based on the interation pattern $(1, 1)$. 
 Leaving the details to the reader, through the repeated application of 
 \eqref{202505191233PM1A} based on 
\begin{equation}\label{inputternary}
 F(n, k, a, b) := \frac{ (a)_{k+f} (b)_{k+e} }{ (n+a)_{k+d} (n+b)_{k+c} }, 
\end{equation}
 this leads to accelerated series of convergence rate $\frac{1}{64}$, as below. 

\begin{example}
 Setting $(a, b, c, d, e, f, n) = 
 ( \frac{1}{2}, \frac{1}{2}, \frac{1}{2}, \frac{1}{2}, \frac{1}{2}, \frac{1}{2}, 1 )$
 after applying our acceleration method to \eqref{inputternary}, 
 we recover the formula of Ramanujan type highlighted in \eqref{202505191141AM1A}.
\end{example}

\section{Further applications}
 We briefly conclude with some further applications of our acceleration method, 
 in addition to our hypergeometric accelerations that yield formulas of Ramanujan type. 
 For the sake of brevity, we leave the requisite details to the leader. 

 As a natural variant of the input functions involved in Section \ref{sectionyielding}, 
 we consider the use of bivariate rational function factors 
 for input functions to be applied via our iterative method, instead of 
 only using combinations of Pochhammer symbols, and this leads us to set 
 For example, by setting 
\begin{equation}\label{210250512534AM1A}
 F(n, k, a) := \frac{ (a+e)_{k+d} }{ (n+a + f)_{k+c} (n+2k+b)}, 
\end{equation}
 this leads us to a variety of accelerations, such as the following. 

\begin{example}
 Setting $(a, b, c, d, e, f, n) = 
 ( -\frac{1}{3}, 0, -\frac{1}{3}, -\frac{1}{3}, 0, 0, \frac{2}{3} )$ after applying the iterative acceleration
 method to \eqref{210250512534AM1A}, we obtain 
 the motivating $\pi$ formula highlighted in \eqref{20250428911PM1A}. 
\end{example}

 By setting the input function $F(n, k, a)$ so as to involve combinations of Pochhammer symbols
 so that $a$ and $n$ having mixed signs, as in the input function
 \begin{equation}\label{202505191018AM1A}
 F(n, k, a) := \frac{ (a-n)_{k+f} (b)_{k+e} }{ (n+d)_{k+a} (n)_{k+c} }, 
\end{equation}
 this has led us toward accelerations as in the following.

\begin{example}
 Setting $(a, b, c, d, e, f, n) = 
 ( \frac{1}{2}, \frac{1}{2}, \frac{1}{2}, 0, 0, 1, 1 )$ after applying the iterative acceleration
 method to \eqref{202505191018AM1A}, we obtain 
 the motivating $\pi$ formula highlighted in \eqref{20250401333A111111M1A}. 
\end{example}

 Similarly, by setting 
\begin{equation}\label{2022520252129210229AM2A}
 F(n, k, a) := \frac{ (-a)_{k+f} (b)_{k+e} }{ (n+a)_{k+d} (n)_{k+c} }, 
\end{equation}
 this has led us toward accelerations as in the following. 

\begin{example}
 Setting $(a, b, c, d, e, f, n) = 
 ( \frac{1}{2}, -\frac{1}{2}, 0, -\frac{1}{2}, 0, 0, 1 )$ after applying the iterative
 acceleration method to \eqref{2022520252129210229AM2A}, we obtain 
 the motivating $\pi$ formula highlighted in \eqref{20250429148AM1A}. 
\end{example}

 By experimenting with variants of input functions as in 
 \eqref{207275707424733PM1A} and \eqref{27072775707570797775777P7M71A}, 
 this has led us toward setting 
\begin{equation}\label{2082858085818910283403A28M2A}
 F(n, k, a) := \frac{ (a)_{k + f} (b)_{k + e} }{ (n+a)_{k+c} (n+a)_{k+d} }, 
\end{equation}
 and toward the accelerations as in the following. 

\begin{example}
 Setting $(a, b, c, d, e, f, n) = 
 ( \frac{2}{3}, \frac{1}{3}, -\frac{1}{3}, \frac{2}{3}, \frac{1}{3}, -\frac{1}{3}, \frac{2}{3} )$
 after applying the iterative acceleration method to \eqref{2082858085818910283403A28M2A}, we obtain 
 the motivating $\pi$ formula shown in \eqref{motiv27bating20250423526}. 
\end{example}

 As another variant of input functions as in 
 \eqref{207275707424733PM1A} and \eqref{27072775707570797775777P7M71A}, 
 we set 
\begin{equation}\label{20250444434543139313043P3A3M32AA}
 F(n, k, a) := \frac{ (d)_{k + f} (b)_{k + e} }{ (n+a)_{k} (n)_{k+c}}, 
\end{equation}
 and this leads us toward accelerations as in the following. 

\begin{example}
 Setting $(a, b, c, d, e, f, n) = 
 ( \frac{1}{2}, \frac{1}{2}, \frac{1}{2}, -\frac{1}{2}, 0, 0, \frac{1}{2} )$
 after applying the iterative acceleration method to \eqref{20250444434543139313043P3A3M32AA}, we obtain 
 the $\pi$ formula highlighted as a motivating result in \eqref{202504050130610000}. 
\end{example}

 We have experimentally discovered that 
 the application of our iterative method to 
\begin{equation}\label{27072757075717971707487A7M7A}
 F(n, k, a) : = (-1)^{k} \frac{ (a+e)_{k+c}^{2} }{ (n+a+d)_{k+b}^2 } 
\end{equation}
 yields accelerated series of convergence rate $\frac{4}{729}$, as below.

\begin{example}
 Setting $(a, b, c, d, e, f, n) = 
 ( \frac{1}{2}, \frac{1}{2}, \frac{1}{2}, \frac{1}{2}, 0, \frac{1}{2}, \frac{1}{2} )$ 
 after applying the iterative acceleration method to \eqref{27072757075717971707487A7M7A}, we obtain 
 the motivating $\pi$ formula highlighted in \eqref{20250651650146PM1A}. 
\end{example}

% \textsc{B.\ C.\ Berndt}, 
% Ramanujan’s Notebooks, Part IV, 
% Springer-Verlag, New York (1994). 

% \textsc{D.\ Borwein and J.\ M.\ Borwein}, 
% On an intriguing integral and some series related to {$\zeta(4)$}, 
% \emph{Proc. Amer. Math. Soc.} {\bf 123(4)} (1995), 1191--1198. 

\end{document}